\documentclass[11pt]{amsart}

\usepackage[T1]{fontenc}
\usepackage{lmodern}
\usepackage{microtype}
\usepackage{amsmath,amssymb,amsthm,mathtools}
\usepackage{geometry}
\usepackage{enumitem}
\usepackage{xcolor}
\usepackage{aliascnt}
\usepackage[hidelinks]{hyperref}
\hypersetup{
  pdftitle={Hyperelliptic Brill--Noether loci},
  pdfauthor={Nero Budur and An-Khuong Doan}
}
\usepackage[nameinlink,capitalise]{cleveref}

\allowdisplaybreaks
\setlist{itemsep=2pt,topsep=4pt}

\newtheorem{theorem}{Theorem}[section]

\newaliascnt{proposition}{theorem}
\newtheorem{proposition}[proposition]{Proposition}
\aliascntresetthe{proposition}

\newaliascnt{lemma}{theorem}
\newtheorem{lemma}[lemma]{Lemma}
\aliascntresetthe{lemma}

\newaliascnt{corollary}{theorem}
\newtheorem{corollary}[corollary]{Corollary}
\aliascntresetthe{corollary}

\newaliascnt{definition}{theorem}

\aliascntresetthe{definition}

\theoremstyle{remark}
\newaliascnt{remark}{theorem}
\newtheorem{remark}[remark]{Remark}
\aliascntresetthe{remark}

\newcommand{\PP}{\mathbf P}
\newcommand{\Aff}{\mathbf A}
\newcommand{\CC}{\mathbf C}
\newcommand{\QQ}{\mathbf Q}

\newcommand{\OO}{\mathcal O}
\newcommand{\Pic}{\operatorname{Pic}}
\newcommand{\Ext}{\operatorname{Ext}}
\newcommand{\Hom}{\operatorname{Hom}}
\newcommand{\Sym}{\operatorname{Sym}}
\newcommand{\Spec}{\operatorname{Spec}}
\newcommand{\rank}{\operatorname{rank}}
\newcommand{\Cat}{\operatorname{Cat}}

\newcommand{\IC}{\operatorname{IC}}
\newcommand{\MHM}{\operatorname{MHM}}

\newcommand{\id}{\operatorname{id}}

\newcommand{\Fitt}{\operatorname{Fitt}}
\newcommand{\RGamma}{\mathbf R\Gamma}
\newcommand{\Rp}{\mathbf R p_*}
\newcommand{\Rq}{\mathbf R q_*}
\newcommand{\Acal}{\mathcal A}
\newcommand{\Ecal}{\mathcal E}
\newcommand{\Fcal}{\mathcal F}
\newcommand{\Kcal}{\mathcal K}
\newcommand{\Lcal}{\mathcal L}
\newcommand{\Mcal}{\mathcal M}
\newcommand{\Ncal}{\mathcal N}
\newcommand{\Qcal}{\mathcal Q}

\newcommand{\QH}{\QQ^H}

\title[Hyperelliptic Brill--Noether loci]{Hyperelliptic Brill--Noether loci}

\author{Nero Budur}
\address{Department of Mathematics, KU Leuven, Celestijnenlaan 200B, 3001 Leuven, Belgium;   YMSC, Tsinghua University, 100084 Beijing, China}
\email{nero.budur@kuleuven.be}

\author{An-Khuong Doan}
\address{Institute for Advanced Studies in Mathematics, Harbin Institute of Technology, Harbin, China 150001}
\email{an-khuong.doan@hit.edu.cn}

\begin{document}

\begin{abstract}
Hyperelliptic theta divisors, and more generally,  Brill--Noether loci, are shown to be locally catalecticant determinantal varieties. This has many consequences for their local and global structure.
\end{abstract}

\maketitle

\section{Introduction}

Let $C$ be a smooth complex projective hyperelliptic curve of genus $g\geq2$.
For integers $d$ and $r\geq0$, the degree-$d$ Brill--Noether locus is
\[
 W^r_d(C)=\{L\in\Pic^d(C)\mid h^0(C,L)\geq r+1\},
\]
endowed with the
 scheme structure of \cite[Chapter IV, \S3]{ACGH}.  Without loss of generality, we work in
the  range
\[
 0\leq d\leq g-1.
\]
Fix $L\in W^0_d(C)$ and write
\begin{equation}\label{eq:basic-numerics}
 \ell=h^0(C,L)\geq1,\qquad
 \ell'=h^1(C,L)=g-d+\ell-1,\qquad
 b=d-2\ell+2.
\end{equation}
Since $d\leq g-1$,
\begin{equation}\label{eq:ellprime-ge-ell}
 \ell'-\ell=g-d-1\geq0.
\end{equation}
For  variables $z_0,\ldots,z_{\ell+\ell'-2}$, define the
$\ell'\times\ell$ rectangular  catalecticant matrix
\begin{equation}\label{eq:rectangular-cat}
 \Cat_{\ell',\ell}(z)=
 \bigl(z_{i+j}\bigr)_{
 0\leq i\leq\ell'-1,\ 0\leq j\leq\ell-1}
 =
 \begin{pmatrix}
 z_0&z_1&\cdots&z_{\ell-1}\\
 z_1&z_2&\cdots&z_\ell\\
 \vdots&\vdots&&\vdots\\
 z_{\ell'-1}&z_{\ell'}&\cdots&z_{\ell+\ell'-2}
 \end{pmatrix}.
\end{equation}
For $1\leq s\leq\ell$, denote by $I_s(\Cat_{\ell',\ell})$ the ideal of
its $s\times s$ minors.

\begin{theorem}\label{thm:main}
Let $C$, $d$, and $L$ be as above, and define $\ell$, $\ell'$, and $b$ by
\eqref{eq:basic-numerics}.  Then
$ b\geq0$ and
there is a  local $\CC$-isomorphism for the \'etale topology
\[
 (\Pic^d(C),L)\simeq_{\mathrm{\acute et}}
 (\Aff^{\ell+\ell'-1}\times\Aff^b,0)
\]
which, simultaneously for every $0\leq r\leq\ell-1$, induces an embedded
local \'etale isomorphism
\begin{equation}\label{eq:all-r-etale-normal-form}
 (\Pic^d(C),W^r_d(C),L)
 \simeq_{\mathrm{\acute et}}
 \bigl(\Aff^{\ell+\ell'-1}\times\Aff^b,
 V(I_{\ell-r}(\Cat_{\ell',\ell}(z)))\times\Aff^b,0\bigr),
\end{equation}
that is, there are local \'etale coordinates
\[
 z_0,\ldots,z_{\ell+\ell'-2},u_1,\ldots,u_b
\]
in which the ideal of $W^r_d(C)$ is
$I_{\ell-r}(\Cat_{\ell',\ell}(z))$ for every
$0\leq r\leq\ell-1$. The same conclusions hold therefore analytically.
\end{theorem}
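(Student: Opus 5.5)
We will first establish the numerics and then build the normal form from the standard local description of Brill--Noether loci by a Petri-type map. Let $\pi\colon C\to\PP^1$ be the hyperelliptic double cover and $H=\pi^*\OO_{\PP^1}(1)$. By the structure of special line bundles on hyperelliptic curves (Clifford plus the fact that every $g^r_d$ with $d\le g-1$ is composed of the $g^1_2$), we can write $L\simeq H^{\otimes(\ell-1)}\otimes\OO_C(B)$. Here $B$ is an effective divisor of degree $b=d-2\ell+2$ which is the base locus of $|L|$ and contains no fibre of $\pi$. This gives $b\ge0$. Moreover $H^0(L)=\pi^*H^0(\OO_{\PP^1}(\ell-1))\cdot s_B$, and dually $K_C=H^{\otimes(g-1)}$ yields $H^0(K_C-L)\simeq H^0(H^{\otimes(g-\ell)}(-B))$, of dimension $\ell'$.

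Next we invoke the local determinantal description of \cite[Chapter IV]{ACGH}. Near $L$ there is a map of trivial bundles $\phi\colon\OO^{\ell}\to\OO^{\ell'}$ on a neighbourhood $U$ of $L$ in $\Pic^d(C)$, with $\phi(L)=0$, such that the ideal of $W^r_d$ is $I_{\ell-r}(\phi)$ for all $r$ at once. Its differential at $L$ is the dual of the Petri map $\mu_0\colon H^0(L)\otimes H^0(K_C-L)\to H^0(K_C)=T^*_L\Pic^d$. The linear part of $\phi$ is therefore the matrix of linear forms $(\mu_0(s_j\otimes t_i))$, viewed as functions on $T_L\Pic^d$. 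Choose bases $s_j=x^j s_B$ for $0\le j\le\ell-1$ and $t_i=x^i\cdot(\text{fixed section})$ for $0\le i\le\ell'-1$, where $x$ is an affine coordinate on $\PP^1$. Then $\mu_0(s_j\otimes t_i)$ depends only on $i+j$. The image of $\mu_0$ is spanned by $x^k\omega_0$ for $0\le k\le\ell+\ell'-2$, and these are linearly independent. Taking $z_k$ to be the corresponding coordinates, the linear part of $\phi$ is exactly $\Cat_{\ell',\ell}(z)$. The cokernel of $\mu_0$ has dimension $g-(\ell+\ell'-1)=b$, and we complete to coordinates with $u_1,\ldots,u_b$.

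The main obstacle is passing from the linear part to $\phi$ itself. In general the determinantal ideals of $\phi$ are not those of its linearization, so we need a rigidity argument. Our first approach exploits the explicit hyperelliptic structure. Write $\Pic^d$ near $L$ as $\Pic^{d-b}\times$ (deformations of $B$), via $C^{(b)}\times\Pic^{2\ell-2}$, using that $H^{\ell-1}$ deforms as $M\otimes\OO(D)$. We would compute $\phi$ through the Mumford-type presentation $H^0(L(E))\to H^0(L(E)|_E)$ for a large divisor $E$ that is a union of $\pi$-fibres. In that presentation the multiplication-by-$x$ structure is preserved under deformation. This should make $\phi$ equivariant with respect to the shift operators, i.e.\ a ``Hankel'' matrix $(f_{i+j})$ whose entries $f_k$ are functions on $U$ with independent differentials $dz_k$. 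Setting $z_k:=f_k$ then gives étale coordinates in which $\phi=\Cat_{\ell',\ell}(z)$ on the nose. The $u$-coordinates, corresponding to moving $B$, appear only through an invertible change of frame.

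If the Hankel structure only survives up to $\mathrm{GL}_\ell\times\mathrm{GL}_{\ell'}$ action, that is enough, since minors ideals are invariant under that action. As a fallback, we would argue by deformation theory. The catalecticant determinantal loci are versal, meaning their matrix deformations are trivial modulo coordinate changes (unobstructedness via the $T^1$ computation). A matrix whose linear part is generic catalecticant would then be equivalent, after an étale coordinate change, to its linear part, simultaneously for all minor ideals. Because everything is algebraic and Artin approximation applies, the conclusion holds étale-locally, and hence also analytically.
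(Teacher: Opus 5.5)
There is a genuine gap at exactly the step you call the main obstacle. Nothing in the proposal proves that the whole matrix $\phi$, and not just its linear part, satisfies the catalecticant identities $\phi_{i+1,j}=\phi_{i,j+1}$ (even up to $\mathrm{GL}_\ell\times\mathrm{GL}_{\ell'}$). Beyond the Petri computation, that is the entire content of the theorem.

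Your fallback is false: the claimed rigidity fails, and a matrix with catalecticant linear part need not be equivalent to its linear part. For $\ell=\ell'=2$, $b=1$ (e.g.\ $g=4$, $d=3$, $L=A\otimes\OO_C(p)$), take
\[
 \phi=\begin{pmatrix} z_0 & z_1+u^2\\ z_1 & z_2\end{pmatrix},\qquad
 \det\phi=z_0z_2-\bigl(z_1+\tfrac{u^2}{2}\bigr)^2+\tfrac{u^4}{4}.
\]
Its linear part is $\Cat_{2,2}(z)$, but $V(\det\phi)$ has an isolated $A_3$ singularity at $0$. By contrast, $V(z_0z_2-z_1^2)\times\Aff^1$ is singular along a line. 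So no $T^1$, versality or Artin-approximation argument can replace a structural reason why the higher-order terms obey the catalecticant pattern.

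Your first approach is where such a reason would have to come from, but it is only asserted (``this should make $\phi$ equivariant''). Multiplication by an affine coordinate $x$ is not an endomorphism of $H^0(L_s(E))$. The presentation $H^0(L_s(E))\to H^0(L_s(E)|_E)$ has size $N\times e$, and you give no reason why a Hankel pattern would survive the reduction to an $\ell'\times\ell$ matrix. The proposed chart $\Pic^{d-b}\times(\text{deformations of }B)$ also has dimension $g+b$, so it is not a local model of $\Pic^d(C)$ when $b>0$.

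The missing idea is to push the Poincar\'e bundle down to $\PP^1\times S$. The rank-two bundle $\Ecal=(\pi\times\id_S)_*\Lcal$ has central fibre $\OO(\ell-1)\oplus\OO(-\ell'-1)$. Since $H^1(\PP^1,\OO\oplus\OO(\ell+\ell'))=0$, the central projection onto $\OO(\ell-1)$ extends to a surjection $\Ecal\to\OO(\ell-1)$ near $L$. Its kernel is a family of degree $-\ell'-1$ line bundles on $\PP^1$, hence isomorphic to $\OO(-\ell'-1)$ after shrinking. Thus $\Ecal$ is an extension of two fixed line bundles, classified by a regular map $\xi:S\to H^1(\PP^1,\OO(-\ell-\ell'))$. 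Then $\Rp\Lcal=\Rq\Ecal$ is represented by the connecting map $D=\xi\smile-:H^0(\OO(\ell-1))\otimes\OO_S\to H^1(\OO(-\ell'-1))\otimes\OO_S$.

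Let $\widetilde z_k$ be the coordinates of $\xi$ dual to the monomials of degree $\ell+\ell'-2$. Serre duality gives $\langle D(X^{\ell-1-j}Y^j),X^{\ell'-1-i}Y^i\rangle=\langle\xi,X^{\ell+\ell'-2-i-j}Y^{i+j}\rangle=\widetilde z_{i+j}$. So $D=\Cat_{\ell',\ell}(\widetilde z)$ holds on the nose, to all orders.

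From there your remaining ingredients suffice. Comparing Fitting ideals of $\operatorname{coker}D\simeq R^1p_*\Lcal$ with the ACGH presentation shows that $I_{\ell-r}(D)$ is the ideal of $W^r_d(C)$ for every $r$ simultaneously. Your Petri rank computation, together with the injectivity of $\xi\mapsto\xi\smile-$, shows that $d\widetilde z_0,\dots,d\widetilde z_{\ell+\ell'-2}$ are independent at $L$. They can therefore be completed by $u_1,\dots,u_b$ to \'etale coordinates.
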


Due to the homogeneity of the equations defining locally the Brill-Noether loci one has:

\begin{corollary}
For every $0\leq r\leq\ell-1$, the embedded germ
$(\Pic^d(C),W^r_d(C),L)$ is locally isomorphic for the \'etale and analytic topologies  to its scheme-theoretic tangent cone.
\end{corollary}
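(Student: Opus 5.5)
The corollary follows from \cref{thm:main} once we show that the model germ is a homogeneous cone. Fix $0\leq r\leq\ell-1$.

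\textbf{Step 1: the model ideal is homogeneous.} In the coordinates $z_0,\ldots,z_{\ell+\ell'-2},u_1,\ldots,u_b$, the theorem says $W^r_d(C)$ is cut out by $J=I_{\ell-r}(\Cat_{\ell',\ell}(z))$, extended to $\CC[z,u]$. Each entry of $\Cat_{\ell',\ell}(z)$ is a single variable $z_{i+j}$. Hence every $(\ell-r)\times(\ell-r)$ minor is a homogeneous polynomial of degree $\ell-r$ in the $z$'s, and it does not involve the $u$'s. So $J$ is generated by forms of one degree and is homogeneous for the standard grading on $\CC[z,u]$.

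\textbf{Step 2: the tangent cone of a cone is itself.} Let $R=\CC[z,u]$ with maximal ideal $\mathfrak m=(z,u)$. The scheme-theoretic tangent cone of $(V(J),0)$ inside $(\Aff^{N},0)$, with $N=\ell+\ell'-1+b$, is
\[
\operatorname{Spec}\bigoplus_k \mathfrak m^k/(\mathfrak m^{k+1}+J\cap\mathfrak m^k),
\]
embedded in $\operatorname{Spec}\operatorname{gr}_{\mathfrak m}R=\Aff^N$. It is cut out by the ideal $J^*$ of initial forms of elements of $J$.
\begin{itemize}
\item Clearly $J\subseteq J^*$, since the generators of $J$ are already forms.
\item Conversely, let $f\in J$ and decompose $f=\sum_k f_k$ into homogeneous components. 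Because $J$ is homogeneous, each $f_k\in J$. In particular the lowest nonzero component of $f$, which is its initial form, lies in $J$. Hence $J^*\subseteq J$.
\end{itemize}
Therefore $J^*=J$. Under the canonical identification $\operatorname{gr}_{\mathfrak m}R\cong R$, which is the identity on polynomials in $z,u$, the embedded tangent cone $(\Aff^N,C_0V(J),0)$ equals $(\Aff^N,V(J),0)$.

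\textbf{Step 3: transport along the \'etale isomorphism.} Formation of the embedded scheme-theoretic tangent cone depends only on the completed local rings $\widehat{\OO}_{\Pic^d(C),L}$ and $\widehat{\OO}_{\Aff^N,0}$, together with the ideals they induce. The tangent cone at $L$ equals $\operatorname{Spec}\operatorname{gr}_{\mathfrak m_L}(\OO/\mathcal I_{W^r_d})$, with its embedding into $\operatorname{Spec}\operatorname{gr}_{\mathfrak m_L}\OO$, and taking the associated graded ring commutes with completion. A local \'etale isomorphism of embedded germs induces an isomorphism of these completed pairs. Consequently the embedded tangent cone of $(\Pic^d(C),W^r_d(C),L)$ is isomorphic to that of the model, which by Step 2 is the model itself.

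Composing with the embedded \'etale isomorphism of \cref{thm:main} gives
\[
(\Pic^d(C),W^r_d(C),L)\simeq_{\mathrm{\acute et}}(\Aff^N,V(J),0)=(\Aff^N,C_0V(J),0)\cong\bigl(T_L\Pic^d(C),C_LW^r_d(C),0\bigr).
\]
An \'etale germ isomorphism induces an analytic one, so the analytic statement follows from the same chain.

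The only point needing care is Step 3: that the tangent cone is intrinsic to the germ, so the identification is canonical up to isomorphism. This is standard because it is computed from the $\mathfrak m$-adic filtration, and that filtration is preserved by \'etale maps of germs.
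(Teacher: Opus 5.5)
Your proposal is correct and follows the paper's route: the paper deduces the corollary in one line from the homogeneity of the catalecticant minors in the \'etale coordinates of \cref{thm:main}. Your Steps 2 and 3 spell out the standard facts the paper leaves implicit: a homogeneous ideal equals its ideal of initial forms, and the embedded tangent cone depends only on the completed local ring, so it is carried along by the \'etale isomorphism (via the linear map $d\Psi_L$).
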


Theorem \ref{thm:main} is new even for hyperelliptic theta divisors. It answers positively our earlier \cite[Question~4.11]{BD} which we posed after noticing coincidences. The theorem has many consequences since there are many results known about the singularities of  catalecticant determinantal varieties. The state of the art on the latter at that time was summarized in \cite[Theorem~7.20]{BD}, and those results translate now into results for hyperelliptic Brill-Noether loci. That is, \cite[Proposition~4.14]{BD} holds unconditionally, we do not state it again here. In particular the results of Schnell-Yang \cite{SY} on hyperelliptic theta divisors
follow now effortlessly, a canonical log resolution is available for all hyperelliptic Brill-Noether loci, and one has the following  on multiplicities and the log canonical thresholds.

\begin{proposition}
Let $C$, $d$, $L$, $\ell$, $\ell'$ be as above, and $0\leq r\leq \ell-1$. Then
$$
 \operatorname{mult}_{L} W^r_d(C)
 =
 \binom{\ell'+r}{\ell-r-1}
$$
and

\vspace{-12pt}
$$
 \operatorname{lct}_{L}
 \bigl(\Pic^d(C),W^r_d(C)\bigr)
 =\left\{
 \begin{array}{ll}
1 & \text{ if }d=g-1\text{ and }r=0,\\
 1+\displaystyle\frac{\ell'+r-1}{\ell-r} & \text{ if }d<g-1\text{ or }r>0.
 \end{array}
 \right.
$$
\end{proposition}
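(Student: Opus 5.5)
The plan is to reduce both formulas to statements about catalecticant determinantal varieties, using \cref{thm:main}, and then invoke the known results on such varieties collected in \cite[Theorem~7.20]{BD}.

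\emph{Reduction to the model.} Multiplicity and log canonical threshold of a pair are local invariants for the étale topology. They are also unchanged under taking the product with a smooth factor $\Aff^b$: the multiplicity of $Z\times\Aff^b$ at $(0,0)$ equals that of $Z$ at $0$, and for the lct one can use a log resolution of $(\Aff^N,Z)$ times $\Aff^b$. By \eqref{eq:all-r-etale-normal-form} it is therefore enough to compute $\operatorname{mult}_0$ and $\operatorname{lct}_0$ of $Z_t:=V(I_t(\Cat_{\ell',\ell}(z)))\subset\Aff^{n+1}$, where $t=\ell-r$ and $n=\ell+\ell'-2$. Since $1\le t\le\ell\le\ell'$, the Gruson--Peskine/Eisenbud description applies. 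By it, $I_t(\Cat_{\ell',\ell})$ is prime, independent of the shape of the Hankel matrix as long as both sides are $\ge t$, and $Z_t$ is the affine cone over the secant variety $\sigma_{t-1}(\nu_n)$, the closure of the union of $(t-2)$-planes spanned by $t-1$ points of the rational normal curve of degree $n$ in $\PP^n$.

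\emph{Multiplicity.} Because the ideal is homogeneous, the multiplicity of the cone at the vertex equals the degree of the projective variety $\sigma_{t-1}(\nu_n)$. The classical formula is $\deg\sigma_k(\nu_n)=\binom{n-k+1}{k}$; it can be obtained, for instance, from the Eagon--Northcott-type resolution, or from the Hilbert series of the Hankel determinantal ring, or from Porteous' formula. With $k=t-1=\ell-r-1$ we get $n-k+1=\ell'+r$, which gives $\binom{\ell'+r}{\ell-r-1}$. As a sanity check, $\Theta$ at a point with $h^0=\ell$, $d=g-1$, $r=0$ gives $\binom{\ell}{\ell-1}=\ell$, which is Riemann's singularity theorem.

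\emph{Log canonical threshold.} Here I would quote the lct of Hankel determinantal ideals from \cite[Theorem~7.20]{BD}. The underlying argument is a canonical log resolution obtained by successively blowing up the strict transforms of $Z_1\subset Z_2\subset\cdots\subset Z_{t}$, the affine cones over the secant varieties, in the spirit of Vainsencher's resolution for generic determinantal varieties. One then computes the discrepancies $k_i$ and the orders $a_i$ of $I_t$ along the exceptional divisors $E_i$ (over $Z_i$, $i\le t-1$) and along the strict transform of $Z_t$, and takes $\min_i (k_i+1)/a_i$. The divisor $E_{1}$ over the cone on $\nu_n$, which has codimension $n-1$ in $\Aff^{n+1}$, should realise the minimum $1+\frac{\ell'+r-1}{\ell-r}=\frac{n}{t}$. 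The competing bound is the codimension of $Z_t$, namely $\ell'-\ell+2r+1$. The case distinction is then a matter of arithmetic. When $d=g-1$ and $r=0$ we have $\ell=\ell'$ and $t=\ell$, so $Z_t$ is a reduced hypersurface and the lct is capped at $1$. Otherwise the codimension of $Z_t$ is at least $2$, and one must check that $n/t$ does not exceed it. I expect this verification, together with confirming that the minimum over all exceptional divisors is attained at $E_1$, to be the main obstacle. If the cited theorem does not already cover it, I would carry out the discrepancy computation for the blow-up sequence explicitly, using the $\mathrm{SL}_2$-equivariance of the secant stratification to reduce to computing the orders of vanishing at a general point of each stratum.
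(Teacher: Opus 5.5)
Your route is the paper's. The paper deduces the proposition from \cref{thm:main}, which reduces everything to the essential factor $V(I_t(\Cat_{\ell',\ell}))$ with $t=\ell-r$. This is the affine cone over the $(t-1)$-secant variety of the rational normal curve of degree $n=\ell+\ell'-2$, and the paper reads off multiplicity and lct from the results collected in \cite[Theorem~7.20]{BD}. Your multiplicity argument is fine. Concretely, shape-independence lets you pass to the $t\times(n+2-t)$ Hankel matrix, whose maximal minors have the expected codimension $n+3-2t$. Eagon--Northcott then gives degree $\binom{n+2-t}{t-1}=\binom{\ell'+r}{\ell-r-1}$.

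Your explanatory sketch for the lct, however, is wrong as written. It would mislead you if you fell back on the explicit discrepancy computation.

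\emph{The arithmetic.} We have $1+\frac{\ell'+r-1}{\ell-r}=\frac{\ell+\ell'-1}{\ell-r}=\frac{n+1}{t}$, not $\frac nt$.

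\emph{The divisor.} This value comes from the exceptional divisor of the blow-up of the vertex $Z_1=V(I_1)=\{0\}$: discrepancy $n$, and $I_t$ is generated by forms of degree $t$. It does not come from the divisor over the cone on $\nu_n$. That cone is your $Z_2$, of codimension $n-1$, along which $I_t$ has order $t-1$, giving $\frac{n-1}{t-1}$.

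\emph{The comparison.} In general $Z_{i+1}$ has codimension $n+1-2i$, and $I_t$ has order $t-i$ at its general point. So the candidate values are
\[
 \frac{n+1-2i}{t-i}=2+\frac{n+1-2t}{t-i},\qquad 0\leq i\leq t-1,
\]
with $n+1-2t=\ell'-\ell+2r-1$.
\begin{itemize}
\item When $n+1-2t\geq0$, which holds unless $d=g-1$ and $r=0$, the minimum is at $i=0$ and equals $\frac{n+1}{t}$.
\item In the exceptional case $n+1-2t=-1$, the minimum is at $i=t-1$ and equals $1$.
\end{itemize}
The $i=t-1$ term is exactly your codimension bound $n+3-2t=\ell'-\ell+2r+1$. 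So the step you flag as the main obstacle is a one-line comparison, once the cited log resolution guarantees that these divisors suffice.
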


The essential factor in \eqref{eq:all-r-etale-normal-form} is also the affine
cone  over the $(\ell-r-1)$-secant variety of the  rational normal
curve of degree $\ell+\ell'-2$, see \cite[Theorem~7.20(ii)]{BD}. 
Since the survey \cite{BD} there have been recent advances \cite{Brogan, CDOR} on secant varieties which now translate into new results on hyperelliptic Brill-Noether loci.

The results of Brogan
\cite{Brogan} on secant varieties give now a full description of the Milnor fibers and monodromy along the hyperelliptic theta divisors, which was one of the motivations behind \cite{Brogan}. We do not repeat those results here, and their translated version is immediate. Using the canonical log resolution one  verifies now that the Monodromy Conjecture, relating poles of the topological zeta function with monodromy eigenvalues, holds for hyperelliptic theta divisors.

Using  results of Chen--Dirks--Olano--Raychaudhury \cite{CDOR} on secant varieties we prove part~\textup{(i)} of the next theorem.  It has
several implications,  parts~\textup{(ii)--(iv)}, for the global
topology of the Brill--Noether loci.  Let $A$ denote the hyperelliptic
$g^1_2$, and $C^{(m)}$ the $m$-symmetric product.

\begin{theorem}
\label{thm:global-package}
Assume that $X=W_d^r(C)$ is nonempty, and retain the notation
\[
 m=d-2r,\qquad c=g-m,\qquad Z_s=W_d^{r+s}(C),
 \qquad 0\leq s\leq\left\lfloor\frac m2\right\rfloor.
\]
Then $m\geq0$ and the following statements hold.
\begin{enumerate}[label=\textup{(\roman*)},leftmargin=2.4em]
\item
The variety $X$ is a rational homology manifold of pure dimension $m$, and there is an identification of pure Hodge modules
\begin{equation}\label{eq:global-constant-IC}
 \QH_X[m]\simeq\IC^H_X.
\end{equation}

\item
The morphism
\[
 a_{d,r}:C^{(m)}\longrightarrow X,
 \qquad D\longmapsto A^r\otimes\OO_C(D),
\]
is a semismall resolution, and
\begin{equation}\label{eq:global-MHM-decomposition}
 Ra_{d,r*}\QH_{C^{(m)}}[m]
 \simeq
 \bigoplus_{s=0}^{\lfloor m/2\rfloor}
 \QH_{Z_s}[m-2s](-s)
\end{equation}
in $D^b\MHM(X)$.  Consequently, noncanonically as pure rational Hodge
structures,
\begin{equation}\label{eq:global-cohom-decomposition}
 H^k(C^{(m)},\QQ)
 \simeq
 \bigoplus_{s=0}^{\lfloor m/2\rfloor}
 H^{k-2s}(Z_s,\QQ)(-s),
\end{equation}
and in particular
when $m<2$, the next formula is to be read with its
second summand omitted,
\begin{equation}\label{eq:global-recursion}
 H^k(C^{(m)},\QQ)
 \simeq H^k(X,\QQ)\oplus
 H^{k-2}(C^{(m-2)},\QQ)(-1).
\end{equation}

\item
With the convention that binomial coefficients outside
$\{0,\ldots,g\}$ vanish,
\begin{equation}\label{eq:global-hodge}
 h^{p,q}(X)=
 \begin{cases}
  \binom gp\binom gq,&p+q\leq m,\\[4pt]
  \binom g{m-p}\binom g{m-q},&p+q\geq m,
 \end{cases}
\end{equation}
and hence
\begin{equation}\label{eq:global-betti}
 b_k(X)=
 \begin{cases}
  \binom{2g}{k},&0\leq k\leq m,\\[4pt]
  \binom{2g}{2m-k},&m\leq k\leq2m.
 \end{cases}
\end{equation}
Moreover, $X$ is formal over $\QQ$.

\item
Let $V=H^1(\Pic^d(C),\QQ)$ and let
$\theta\in\bigwedge^2V$ be the principal-polarization class.  Put
\[
 \mathcal I_{m,c}=
 \bigoplus_{k\geq m+1}
 \ker\!\left(
  \theta^c\wedge-:\bigwedge^kV\longrightarrow
  \bigwedge^{k+2c}V
 \right).
\]
Restriction induces an isomorphism of graded rational Hodge algebras
\begin{equation}\label{eq:global-ring}
 H^\bullet(X,\QQ)
 \simeq
 \frac{\bigwedge^\bullet V}{\mathcal I_{m,c}}.
\end{equation}
More precisely, restriction and Gysin give canonical isomorphisms
\begin{equation}\label{eq:global-Hodge-structures}
 H^k(X,\QQ)\simeq
 \begin{cases}
  \bigwedge^kV,&k\leq m,\\[3pt]
  \bigwedge^{k+2c}V(c),&k\geq m.
 \end{cases}
\end{equation}
At $k=m$ these agree through $(1/c!)\theta^c\wedge-$.  In particular,
$H^m(\Pic^d(C),\QQ)\xrightarrow{\sim}H^m(X,\QQ)$.
\end{enumerate}
\end{theorem}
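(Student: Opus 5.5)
The plan is to derive (i) from Theorem~\ref{thm:main} together with the Chen--Dirks--Olano--Raychaudhury results on secant varieties, then use (i) and the decomposition theorem to get (ii), and finally read off (iii) and (iv) from (ii) by induction on $m$.

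\textbf{Part (i).} First, $m\geq0$. Every $L\in X$ has $h^0(L)\ge r+1$. By Clifford's theorem on a hyperelliptic curve, $L=A^{\ell-1}(F)$ with $F$ effective of degree $b=d-2\ell+2\ge0$. Hence $d\ge 2r$.

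Purity of the dimension $m$ follows from the surjection $a_{d,r}$. This map is well defined, since $h^0(A^r(D))\ge r+1$. It is surjective because $L=A^{\ell-1}(F)=A^r(A^{\ell-1-r}(F))$ and $A^{\ell-1-r}(F)$ is effective of degree $m$. It is generically finite: a general $D$ of degree $m\le g-1$ gives $h^0(A^r(D))=r+1$ and no $g^1_2$ inside $D$.

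The rational homology manifold property and $\QH_X[m]\simeq\IC^H_X$ are both local statements. By Theorem~\ref{thm:main}, at every $L$ the germ of $X$ is analytically $(\text{cone over the }(\ell-r-1)\text{-secant variety of the rational normal curve of degree }\ell+\ell'-2)\times\Aff^b$. \cite{CDOR} shows that these secant cones are rational homology manifolds, or equivalently that their constant Hodge module coincides with the IC Hodge module. This property is preserved under products with $\Aff^b$ and under analytic isomorphism.

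To globalize, I would use the natural morphism $\QH_X[m]\to\IC^H_X$. It is an isomorphism because that can be checked on underlying perverse sheaves, and the check is local. I expect this globalization to be formal. The main obstacle in (i) is matching precisely the hypotheses and parameters of \cite{CDOR} with our numerics $\ell'\ge\ell$.

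\textbf{Part (ii).} I would first compute the fibres of $a_{d,r}$. Over $L\in Z_s\setminus Z_{s+1}$, the fibre is $\{D\in|L\otimes A^{-r}|\}\simeq\PP^{s}$, of dimension $s$. By the formula from Theorem~\ref{thm:main} (or Martens), $\dim Z_s=m-2s$. Hence $2s+\dim Z_s\le m$, so $a_{d,r}$ is semismall, with every stratum relevant.

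The decomposition theorem for semismall maps then gives
\[
Ra_*\QH[m]\simeq\bigoplus_s \IC^H_{Z_s}(\mathcal L_s)(-s),
\]
where $\mathcal L_s$ is the local system of top cohomology of the fibres. This local system is $H^{2s}(\PP^s)$, which is constant of rank one. Applying (i) to each $Z_s=W^{r+s}_d$ replaces $\IC^H_{Z_s}$ by $\QH_{Z_s}[m-2s]$, which proves \eqref{eq:global-MHM-decomposition}. Taking hypercohomology gives \eqref{eq:global-cohom-decomposition}.

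For \eqref{eq:global-recursion}, I would identify $\bigoplus_{s\ge1}$ with the same decomposition for $a_{d-2,r}:C^{(m-2)}\to W^{r}_{d-2}\simeq W^{r+1}_d$, via multiplication by $A$. The Tate twist shifts by one.

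\textbf{Parts (iii) and (iv).} I would use the Macdonald description of $H^\bullet(C^{(m)})$ and argue by induction on $m$ using \eqref{eq:global-recursion}. This gives $h^{p,q}(C^{(m)})-h^{p-1,q-1}(C^{(m-2)})$, which yields \eqref{eq:global-hodge} and \eqref{eq:global-betti} by a binomial computation.

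For the ring structure, the restriction map $\bigwedge^kV=H^k(\Pic)\to H^k(X)$ is injective for $k\le m$. This follows via pullback to $C^{(m)}$ and Macdonald, since $H^k(\Pic)\to H^k(C^{(m)})$ is injective for $k\le m$. The dimensions then match, giving the restriction isomorphisms for $k\le m$. For $k\ge m$, Poincaré duality on the rational homology manifold $X$ gives the Gysin isomorphism. The fundamental class of $X$ is $\theta^c/c!$, by Poincaré's formula via the surjection from $C^{(m)}$. Hence the kernel of restriction in degree $k>m$ equals $\ker(\theta^c\wedge-)$, using that the pairing is nondegenerate. This gives \eqref{eq:global-ring}.

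Formality follows because $X$ is a compact rational homology manifold whose cohomology is a quotient of that of a Kähler manifold, through the Hodge-theoretic $dd^c$ argument for the restriction (purity). The hard step overall is (i).
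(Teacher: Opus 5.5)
Your architecture matches the paper's: the local normal form plus \cite{CDOR} for (i), the semismall decomposition theorem plus (i) applied to each $Z_s$ for (ii), Macdonald plus the recursion for (iii), and restriction, Gysin and Poincar\'e's formula for (iv). There is, however, a genuine gap in (i).

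Theorem~\ref{thm:main} gives as local model the \emph{affine cone} $C(\Sigma_s^{(N_s)})$, with $N_s=c+2s-1$. What \cite[Corollary~L]{CDOR} provides is that the \emph{projective} secant variety $\Sigma_s^{(N_s)}$ is a rational homology manifold. That property does not pass to the cone automatically. At the vertex, the cone is a rational homology manifold iff its link, the circle bundle of $\OO_{\Sigma}(-1)$, is a rational homology sphere. By the Gysin sequence this forces $H^\bullet(\Sigma_s^{(N_s)},\QQ)\simeq\QQ[h]/(h^{2s})$, with cup product by the hyperplane class $h$ acting by isomorphisms between consecutive even degrees. The cone over a smooth quadric surface shows this can fail even for a smooth base.

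The paper closes this with a separate cone lemma. It feeds in \cite[Theorem~M]{CDOR} for the cohomology of $\Sigma_s^{(N_s)}$, and intersection-cohomology hard Lefschetz, which equals ordinary hard Lefschetz by rational smoothness. It treats $s=1$ directly, since there the link is a lens space. The step you flag as the ``main obstacle'' is actually the easy part: $(2s-1)$-very ampleness and $\Sigma_s^{(N_s)}\neq\PP^{N_s}$ only need $c\geq1$.

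There are two smaller omissions.
\begin{enumerate}
\item In (iv), Poincar\'e duality and the kernel computation do not give surjectivity of $i^*$ in degrees $k>m$, and \eqref{eq:global-ring} needs it. Surjectivity comes from three facts together:
\begin{itemize}
\item $i_*i^*=\frac1{c!}\theta^c\smile-$;
\item $i_*$ is injective;
\item hard Lefschetz on $\Pic^d(C)$ makes $\theta^c\smile-:H^k\to H^{k+2c}$ surjective for $k\geq g-c=m$.
\end{itemize}
\item For formality, having cohomology that is a quotient of the cohomology of a K\"ahler manifold is not by itself a formality criterion. What you need is purity of $H^k(X,\QQ)$ combined with purity-implies-formality \cite{CiriciHorel}. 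Purity can come from (i), or from $H^k(X,\QQ)$ being the image of the pure $H^k(\Pic^d(C),\QQ)$ under the surjective morphism of mixed Hodge structures $i^*$. The latter route again relies on the surjectivity in item 1.
\end{enumerate}
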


\begin{remark}
Only 
\cref{thm:global-package}\textup{(i)} needs direct input from secant
varieties.  We use \cite[Corollary~L]{CDOR} to identify every proper
higher secant of a rational normal curve as a rational homology manifold and
\cite[Theorem~M]{CDOR} for its ordinary cohomology.  See also Brogan
\cite{Brogan}  for the even-degree secant
varieties relevant to hyperelliptic theta divisors.  Parts~\textup{(ii)--(iv)} use
directly  the rational Poincar\'e duality and
purity supplied by part~\textup{(i)}.

The method of part~\textup{(ii)} is the semismall extension of the argument
of Bressler--Brylinski \cite{BresslerBrylinski}.  In the nonhyperelliptic theta-divisor case they show that the
Abel--Jacobi map is small, so its direct image has only the top support.  

Equation (\ref{eq:global-hodge}) holds by elementary means if one only requires virtual Hodge numbers. Here we use the purity from part~\textup{(i)} for actual ordinary Hodge numbers. 

For a hyperelliptic theta divisor, the Betti numbers in
\eqref{eq:global-betti} appeared  in 
\cite[Theorem~3]{Nakayashiki}. However, we were kindly informed by A. Nakayashiki that the result \cite[Theorem~4]{Nakayashiki} (which is now covered by part~\textup{(i)} here) used for $b_k(X)$ with $m\le k\le 2m$, was not actually proven in \cite{BresslerBrylinski}, and therefore the computation in this range was left unsupported. 

The method for (iv) is the same as the one in \cite[\S4.2]{BresslerBrylinski} for the computation of the
intersection cohomology of a nonhyperelliptic theta divisor.  Their algebra structure is the one transported to intersection cohomology from the small resolution; it should not be confused with the intrinsic cup-product algebra on ordinary cohomology in part~\textup{(iv)}.

\end{remark}

\begin{remark}\label{remCru}
The local-model problem for $W^0_d(C)$ on a hyperelliptic curve was posed in \cite[Question~4.11]{BD}; \cite[Proposition~4.14(ii)]{BD} explains that a positive answer yields the corresponding local models for all $W^r_d(C)$. It was noted in \cite[Proposition 8.17]{BD} that the tangent cones to hyperelliptic Brill-Noether loci, given by the determinantal varieties of a matrix of linear forms associated to the Petri map of $L$ by a classical result of Kempf, are modelled by $\Cat_{\ell',\ell}(z)$. Then, adopting the strategy of the proof of the analog of Theorem \ref{thm:main} for generic curves from \cite{Bu}, the question to be answered in the hyperelliptic case was if the whole deformation theory with cohomology constraints of $L$ is catalecticant, that is, if an $\ell'\times\ell$ matrix of formal power series defining the germ of $W^0_d$ at $L$ satisfies the same equalities between the entries as $\Cat_{\ell',\ell}(z)$. Since the linear entries of this matrix of formal power series are linearly independent up to the catalecticant identities, a formal change of variables would then transform this matrix into $\Cat_{\ell',\ell}(z)$, proving Theorem \ref{thm:main}.
That the local scheme structure on $W^r_d$ can be defined by a matrix of the precise size $\ell'\times\ell$ is a phenomenon due to the approach with $L_\infty$ pairs in deformation theory and arises as a by-product of homotopy transfer. The usual definition of the scheme structure on $W^r_d$ from \cite{ACGH} is in general in terms of bigger size rectangular matrices of regular functions, see (\ref{eq:standard-ideal-r}).
\end{remark}

\begin{remark}[The use of AI] AI was essential.
We asked ChatGPT Pro  the question from Remark \ref{remCru}. The answer was elegant, based on the observation that the push-down of a Poincar\'e bundle to $\PP^1\times S$, where $S$ is a small neighborhood of $L$ in $\Pic^d(C)$, is a fixed extension of $\OO(l-1)$ by $\OO(-l'-1)$, see Proposition \ref{propCrucial}. The  $\ell'\times \ell$ matrix sought for arises  from the connecting map in cohomology. We noted it can be used to bypass \cite[Proposition 8.17]{BD} and  $L_\infty$ pairs. So we asked ChatGPT to rewrite the answer for all $W^r_d(C)$. Thus the proof of Theorem \ref{thm:main} is essentially due to AI. We then tasked Rethlas with editing the translation of some of the results of \cite{Brogan, CDOR}. More interestingly, it also suggested as consequences the deeper parts (ii)-(iv) of Theorem \ref{thm:global-package}. ChatGPT, Rethlas, and our own human effort were used for various check-ups and polishing of the exposition. We have verified all the AI generated content.
\end{remark}

This work was supported by KU Leuven Grant Methusalem
METH/21/03, FWO Grant G0B3123N, HIT Grant AUGA5610320926.  We thank VIASM, Hanoi, for its hospitality,
and P. Bressler and A. Nakayashiki for comments. 

\section{Hyperelliptic preliminaries}

We recall some classical facts about hyperelliptic curves. Fix the hyperelliptic morphism
$
 \pi:C\longrightarrow\PP^1
$
and put
$
 A=\pi^*\OO_{\PP^1}(1).
$
Let $\iota$ denote the hyperelliptic involution.

\begin{lemma}
There are isomorphisms
\begin{align}
 \pi_*\OO_C&\simeq
 \OO_{\PP^1}\oplus\OO_{\PP^1}(-g-1),
 \label{eq:pi-structure}\\
 K_C&\simeq A^{g-1}.
 \label{eq:canonical}
\end{align}
Moreover, for every $0\leq r\leq g$, pullback induces an isomorphism
\begin{equation}\label{eq:sections-A}
 H^0(\PP^1,\OO(r))\xrightarrow{\sim}H^0(C,A^r).
\end{equation}
\end{lemma}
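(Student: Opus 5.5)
We first establish \eqref{eq:pi-structure}, then deduce \eqref{eq:canonical} from it, and finally \eqref{eq:sections-A}.

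\emph{Step 1: \eqref{eq:pi-structure}.} Since $\pi$ is finite and flat of degree $2$, the sheaf $\pi_*\OO_C$ is locally free of rank $2$ on $\PP^1$. The involution $\iota$ acts on it with eigenvalues $\pm1$, giving a splitting
\[
 \pi_*\OO_C\simeq\OO_{\PP^1}\oplus\Mcal .
\]
The invariant part is $\OO_{\PP^1}$, and the anti-invariant part $\Mcal$ is a line bundle. To find its degree, use
\[
 \chi(\OO_C)=\chi(\pi_*\OO_C)=1+\chi(\Mcal),
\]
so $1-g=1+(\deg\Mcal+1)$ and hence $\deg\Mcal=-g-1$. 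Alternatively, $\Mcal^{-2}\simeq\OO(2g+2)$ is the branch divisor line bundle from Riemann--Hurwitz. This gives \eqref{eq:pi-structure}.

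\emph{Step 2: \eqref{eq:canonical}.} Relative duality for the finite flat map $\pi$ gives
\[
 \pi_*K_C\simeq\mathcal{H}om(\pi_*\OO_C,K_{\PP^1})\simeq\OO(-2)\oplus\OO(g-1).
\]
The canonical map of a hyperelliptic curve factors through $\pi$ followed by the Veronese embedding of degree $g-1$, so $K_C\simeq A^{g-1}$.

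A more elementary route: $K_C$ has degree $2g-2=\deg A^{g-1}$. By the projection formula and Step 1,
\[
 h^0(A^{g-1})=h^0(\OO(g-1))+h^0(\OO(-2))=g .
\]
By Riemann--Roch, $h^1(A^{g-1})=h^0(A^{g-1})-(2g-2)-1+g=1$. So $K_C\otimes A^{1-g}$ is a degree-$0$ line bundle with a nonzero section, hence trivial.

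\emph{Step 3: \eqref{eq:sections-A}.} By the projection formula and \eqref{eq:pi-structure},
\[
 H^0(C,A^r)=H^0(\PP^1,\OO(r))\oplus H^0(\PP^1,\OO(r-g-1)).
\]
The first summand is exactly the image of pullback. The second summand vanishes for $r\leq g$, since then $r-g-1<0$.

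The only mildly delicate point is the degree computation for $\Mcal$ in Step 1. Everything else is a routine projection-formula calculation.
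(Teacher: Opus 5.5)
Your proposal is correct and follows the paper's proof: the $\iota$-eigenspace splitting is the same as the paper's trace splitting, the degree $-g-1$ comes from $\chi(\pi_*\OO_C)=\chi(\OO_C)$, and your ``elementary route'' to $K_C\simeq A^{g-1}$ (the projection formula gives $h^0(A^{g-1})=g$, then Riemann--Roch shows $K_C\otimes A^{1-g}$ is a degree-zero line bundle with a nonzero section) is exactly the paper's argument, only placed before \eqref{eq:sections-A} rather than after it. Your first route, via the Veronese factorization of the canonical map, only cites a fact equivalent to the claim, so it is the elementary route that carries the proof; the first route would also become a proof if you used adjunction on the summand $\OO(g-1)\subset\pi_*K_C$ to get a nonzero map $A^{g-1}\to K_C$ between line bundles of equal degree.
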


\begin{proof}
The trace map for the finite flat double cover splits the unit inclusion
$\OO_{\PP^1}\hookrightarrow\pi_*\OO_C$ because the ground field has
characteristic zero.  Thus
\[
 \pi_*\OO_C\simeq\OO_{\PP^1}\oplus N
\]
for a line bundle $N$ on $\PP^1$.  Since finite pushforward preserves
cohomology,
\[
 \chi(\pi_*\OO_C)=\chi(\OO_C)=1-g.
\]
On $\PP^1$, a rank-two vector bundle $E$ satisfies
$\chi(E)=\deg E+2$.  Hence $\deg N=-g-1$, proving
\eqref{eq:pi-structure}.

By the projection formula,
\[
 H^0(C,A^r)
 \simeq H^0\bigl(\PP^1,
 \OO(r)\otimes\pi_*\OO_C\bigr)
 \simeq H^0(\PP^1,\OO(r))
 \oplus H^0(\PP^1,\OO(r-g-1)).
\]
The second summand vanishes for $0\leq r\leq g$, which proves
\eqref{eq:sections-A}.

It remains to identify the canonical bundle.  The line bundle $A^{g-1}$ has
degree $2g-2$, and \eqref{eq:sections-A} gives
$h^0(C,A^{g-1})=g$.  Riemann--Roch therefore yields
\[
 h^0\bigl(C,K_C\otimes A^{1-g}\bigr)=1.
\]
The line bundle $K_C\otimes A^{1-g}$ has degree zero.  A degree-zero line
bundle on a smooth projective curve with a nonzero section is trivial.
Thus $K_C\otimes A^{1-g}\simeq\OO_C$, proving
\eqref{eq:canonical}.
\end{proof}

Choose a base point on $C$ and a normalized Poincar\'e line bundle on
$C\times\Pic^d(C)$.  Let $S$ be an affine neighborhood of $L$ in
$\Pic^d(C)$, to be shrunk repeatedly, and let $\Lcal$ be the restricted
Poincar\'e line bundle on $C\times S$.  Write
\[
 p:C\times S\to S,\qquad
 q:\PP^1\times S\to S,\qquad
 \Pi=\pi\times\id_S.
\]
Since $\Pi$ is finite flat of degree two,
\[
 \Ecal=\Pi_*\Lcal
\]
is a rank-two vector bundle on $\PP^1\times S$, and
\begin{equation}\label{eq:finite-push}
 \Rp\Lcal\simeq\Rq\Ecal.
\end{equation}

\begin{lemma}\label{lem:central-splitting}
With the notation of \eqref{eq:basic-numerics},
\begin{equation}\label{eq:central-splitting}
 \pi_*L\simeq
 \OO_{\PP^1}(\ell-1)\oplus
 \OO_{\PP^1}(-\ell'-1).
\end{equation}
\end{lemma}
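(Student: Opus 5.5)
We want to show $\pi_*L\simeq\OO(\ell-1)\oplus\OO(-\ell'-1)$. Since $\pi$ is finite flat of degree two, $\pi_*L$ is a rank-two vector bundle on $\PP^1$. By Grothendieck's theorem it splits as $\OO(a)\oplus\OO(a')$ with $a\geq a'$. Everything is then determined by cohomology, using $H^i(\PP^1,\pi_*L)=H^i(C,L)$ because finite pushforward is exact on cohomology.

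First, the degree. We have $\chi(\pi_*L)=\chi(L)=d+1-g$ and $\chi(\pi_*L)=a+a'+2$. This gives $a+a'=d-g-1$. Since $\ell-1-\ell'-1=\ell-\ell'-2=d-g-1$ by \eqref{eq:basic-numerics}, the claimed splitting has the right degree.

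Second, locate the splitting type using the structure of $L$. The standard hyperelliptic fact is that every $L$ of degree $d\leq g-1$ with $h^0=\ell$ can be written as $L\simeq A^{\ell-1}\otimes\OO_C(D)$. Here $D$ is an effective divisor of degree $b=d-2\ell+2$ containing no fiber of $\pi$, so $h^0(\OO_C(D))=1$. Rather than invoke this, I would argue directly on $\PP^1$.

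\begin{itemize}
\item Since $h^0(C,L)=\ell\geq1$ and $h^0(\OO(a))+h^0(\OO(a'))=\ell$, we need $a\geq 0$.
\item Suppose $a'\geq0$. Then $\ell=a+a'+2=d-g+1\leq0$, a contradiction. Hence $a'\leq -1$, so $h^0(\OO(a'))=0$ and $\ell=a+1$, i.e.\ $a=\ell-1$.
\item The degree relation then gives $a'=d-g-1-(\ell-1)=-\ell'-1$.
\end{itemize}

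As a consistency check, $h^1(\pi_*L)=h^1(\OO(-\ell'-1))=\ell'$, matching $h^1(C,L)=\ell'$.

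The only point needing care is the case analysis ruling out $a'\geq0$. That is exactly where the hypothesis $d\leq g-1$, via \eqref{eq:ellprime-ge-ell}, enters. Everything else is Riemann--Roch bookkeeping, analogous to the proof of \eqref{eq:pi-structure}.
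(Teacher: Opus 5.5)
Your proof is correct and follows essentially the same route as the paper: Grothendieck splitting, then $\chi$ giving $a+a'=d-g-1\le -2$, then $h^0=\ell\ge 1$ forcing $a\ge 0$ and the negative degree sum forcing the second summand to contribute no sections, so $a=\ell-1$. The only cosmetic difference is that you obtain $a'=-\ell'-1$ from the degree relation, whereas the paper first notes $a'\le -2$ and then reads off $a'$ from $h^1(C,L)=\ell'$.
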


\begin{proof}
By the Grothendieck theorem, write
\[
 \pi_*L\simeq\OO_{\PP^1}(a)\oplus\OO_{\PP^1}(c),
 \qquad a\geq c.
\]
Because $\pi$ is finite,
\[
 h^i(\PP^1,\pi_*L)=h^i(C,L).
\]
Moreover,
\[
 \deg(\pi_*L)+2
 =\chi(\pi_*L)=\chi(L)=d+1-g,
\]
so
\[
 a+c=d-g-1\leq-2.
\]
The inequality $h^0(C,L)=\ell>0$ forces $a\geq0$.  The preceding sum then
forces $c\leq-2$.  Consequently only $\OO(a)$ contributes to $H^0$, and
only $\OO(c)$ contributes to $H^1$.  Hence
\[
 a+1=\ell,\qquad -c-1=\ell',
\]
which proves \eqref{eq:central-splitting}.
\end{proof}

\section{Hyperelliptic divisor theory and the Petri map}

For any effective divisor $D$ on $C$, let
\[
 \sigma_D\in H^0\bigl(C,\OO_C(D)\bigr)
\]
denote the canonical section: it is the image of $1$ under the natural
inclusion $\OO_C\hookrightarrow\OO_C(D)$ and satisfies
$\operatorname{div}(\sigma_D)=D$. The following is also classical.

\begin{lemma}\label{lem:fixed-divisor}
There is an effective divisor $B$ on $C$ such that
\begin{equation}\label{eq:L-decomposition}
 L\simeq A^{\ell-1}\otimes\OO_C(B),
 \qquad \deg B=b=d-2\ell+2.
\end{equation}
Consequently $b\geq0$.  The divisor $B$ contains no complete fiber of
$\pi$, and
\[
 K_C\otimes L^{-1}
 \simeq A^{\ell'-1}\otimes\OO_C(\iota B).
\]
If $V=H^0(\PP^1,\OO_{\PP^1}(1))$, multiplication by the canonical sections
$\sigma_B$ and $\sigma_{\iota B}$ gives isomorphisms
\begin{align}
 \Sym^{\ell-1}V&\xrightarrow{\sim}H^0(C,L),
 \label{eq:sections-L}\\
 \Sym^{\ell'-1}V&\xrightarrow{\sim}
 H^0(C,K_C\otimes L^{-1}).
 \label{eq:sections-dual}
\end{align}
\end{lemma}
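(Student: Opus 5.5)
The plan is to use the splitting of \cref{lem:central-splitting} to extract the $A^{\ell-1}$ part of $L$, leaving an effective remainder $B$. Twisting \eqref{eq:central-splitting} by $\OO_{\PP^1}(-(\ell-1))$ and using the projection formula gives
\[
 \pi_*(L\otimes A^{1-\ell})\simeq\OO_{\PP^1}\oplus\OO_{\PP^1}(-\ell-\ell'),
\]
so $h^0(C,L\otimes A^{1-\ell})=1$, because $-\ell-\ell'\leq-1$. Let $B$ be the divisor of the unique nonzero section up to scaling. Then $B$ is effective, $L\simeq A^{\ell-1}\otimes\OO_C(B)$, and $\deg B=d-2(\ell-1)=b$, so $b\geq0$.

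\emph{No complete fiber.} If $B\geq\pi^*(p)$ for some $p\in\PP^1$, then $L\otimes A^{-\ell}\simeq\OO_C(B-\pi^*p)$ would be effective. But twisting once more gives $\pi_*(L\otimes A^{-\ell})\simeq\OO(-1)\oplus\OO(-\ell-\ell'-1)$, which has no sections. This contradiction shows that $B$ contains no complete fiber of $\pi$.

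\emph{Residual bundle.} Since $B+\iota B=\pi^*(\pi_*B)$ is a sum of $b$ fibers, $\OO_C(B+\iota B)\simeq A^b$. Combining this with \eqref{eq:canonical} gives
\[
 K_C\otimes L^{-1}\simeq A^{g-1-(\ell-1)-b}\otimes\OO_C(\iota B).
\]
The exponent is $g-\ell-b=g-d+\ell-2=\ell'-1$ by \eqref{eq:basic-numerics}, which is the claimed formula for $K_C\otimes L^{-1}$.

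\emph{The section isomorphisms.} Pullback via \eqref{eq:sections-A} identifies $\Sym^{\ell-1}V=H^0(\PP^1,\OO(\ell-1))$ with $H^0(C,A^{\ell-1})$; this needs $\ell-1\leq g$, which holds since $2(\ell-1)\leq d\leq g-1$. Multiplication by $\sigma_B$ then gives an injection $H^0(A^{\ell-1})\hookrightarrow H^0(L)$. Both spaces have dimension $\ell$, so it is an isomorphism, proving \eqref{eq:sections-L}.

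For \eqref{eq:sections-dual}, the same argument applies to $A^{\ell'-1}$ and multiplication by $\sigma_{\iota B}$. The space $H^0(C,K_C\otimes L^{-1})$ has dimension $\ell'$ by Serre duality, and $\ell'-1\leq g$ because $\ell'-1=g-d+\ell-2\leq g$ when $\ell\leq d/2+1\leq d+2$. Again an injection between spaces of equal dimension is an isomorphism.

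\emph{Main obstacle.} The only delicate point is the bookkeeping that keeps the exponents $\ell-1$ and $\ell'-1$ within the range $[0,g]$ where \eqref{eq:sections-A} applies. The geometric content is just the splitting already established in \cref{lem:central-splitting}.
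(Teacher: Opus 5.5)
Your proof is correct and follows essentially the same route as the paper: both obtain $B$ from the nonnegative summand of the splitting in \cref{lem:central-splitting} (you via the projection formula, the paper via adjunction), then use $\OO_C(B+\iota B)\simeq A^b$ and a dimension count with \eqref{eq:sections-A}. The only small difference is the no-complete-fiber step. There you use $h^0(L\otimes A^{-\ell})=0$ from the twisted splitting, instead of the paper's comparison $h^0(A^\ell)=\ell+1>\ell=h^0(L)$, which spares you the bound $\ell\leq g$.
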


\begin{proof}
Choose the inclusion of the nonnegative summand in
\eqref{eq:central-splitting},
\[
 \OO_{\PP^1}(\ell-1)\hookrightarrow\pi_*L.
\]
By adjunction it corresponds to a nonzero map of line bundles
\[
 A^{\ell-1}=\pi^*\OO_{\PP^1}(\ell-1)\longrightarrow L.
\]
It is injective, and its zero divisor is an effective divisor $B$.  Taking
degrees gives
\[
 \deg B=d-2(\ell-1)=d-2\ell+2=b,
\]
so $b\geq0$.  The induced map on global sections is the contribution of the
positive summand of $\pi_*L$; the negative summand has no global sections.
It is therefore an isomorphism, proving \eqref{eq:sections-L}.

If $B$ contained a complete hyperelliptic fiber, then
$L\simeq A^\ell\otimes\OO_C(B')$ for some effective divisor $B'$.  From
$b\geq0$ one has
\[
 \ell\leq\frac{d+2}{2}\leq\frac{g+1}{2}\leq g.
\]
Thus \eqref{eq:sections-A} gives $h^0(C,A^\ell)=\ell+1$.
Multiplication by the canonical section $\sigma_{B'}$ would inject
$H^0(C,A^\ell)$ into $H^0(C,L)$, contradicting
$h^0(C,L)=\ell$.  Hence $B$ contains no complete fiber.

Put $b=\deg B$.  For every point $x\in C$, the divisor
$x+\iota(x)$ is a fiber of $\pi$; at a ramification point this means the
doubled point.  Therefore
\begin{equation}\label{eq:B-iotaB}
 B+\iota B=\pi^*(\pi_*B),\qquad
 \OO_C(B+\iota B)\simeq A^b.
\end{equation}
Using \eqref{eq:canonical}, \eqref{eq:L-decomposition}, and
\eqref{eq:B-iotaB}, we obtain
\[
 \begin{aligned}
 K_C\otimes L^{-1}
 &\simeq A^{g-\ell}\otimes\OO_C(-B)\\
 &\simeq A^{g-\ell-b}\otimes\OO_C(\iota B)\\
 &\simeq A^{\ell'-1}\otimes\OO_C(\iota B),
 \end{aligned}
\]
because $g-\ell-b=\ell'-1$.  Since
$\ell'-1=g-\ell-b$ lies between $0$ and $g-1$, \eqref{eq:sections-A} gives
\[
 h^0(C,A^{\ell'-1})=\ell'.
\]
Riemann--Roch gives
$h^0(C,K_C\otimes L^{-1})=h^1(C,L)=\ell'$.  Multiplication by
$\sigma_{\iota B}$ is an injection between two vector spaces of dimension
$\ell'$, hence an isomorphism.  This proves
\eqref{eq:sections-dual}.
\end{proof}

Next result appeared as \cite[Proposition 8.17]{BD}, the strategy for which one of us learned at the time from G. Farkas.

\begin{proposition}\label{prop:petri}
The Petri multiplication map
\begin{equation}\label{eq:petri-map}
 \mu_L:H^0(C,L)\otimes H^0(C,K_C\otimes L^{-1})
 \longrightarrow H^0(C,K_C)
\end{equation}
has image of dimension $\ell+\ell'-1$.  In  monomial bases supplied by
\cref{lem:fixed-divisor}, its dual has image equal to the vector space of
$\ell'\times\ell$ rectangular catalecticant matrices.
\end{proposition}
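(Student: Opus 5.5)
Using the isomorphisms \eqref{eq:sections-L} and \eqref{eq:sections-dual} of \cref{lem:fixed-divisor}, we rewrite the Petri map as a multiplication of binary forms. Fix homogeneous coordinates $x,y$ on $\PP^1$, so $V=\langle x,y\rangle$. A basis of $H^0(C,L)$ is then $\{\pi^*(x^{\ell-1-j}y^j)\sigma_B\}_{0\le j\le\ell-1}$, and a basis of $H^0(C,K_C\otimes L^{-1})$ is $\{\pi^*(x^{\ell'-1-i}y^i)\sigma_{\iota B}\}_{0\le i\le\ell'-1}$. The product of two such basis elements is
\[
\pi^*(x^{\ell+\ell'-2-(i+j)}y^{i+j})\,\sigma_B\sigma_{\iota B}.
\]
By \eqref{eq:B-iotaB}, $\sigma_B\sigma_{\iota B}=\sigma_{B+\iota B}$ is the pullback of a section $\tau\in H^0(\PP^1,\OO(b))$, namely the canonical section of $\pi_*B$. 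Hence $\mu_L$ factors as
\[
\Sym^{\ell-1}V\otimes\Sym^{\ell'-1}V\xrightarrow{\ \mathrm{mult}\ }\Sym^{\ell+\ell'-2}V\xrightarrow{\ \cdot\tau\ }\Sym^{g-1}V\xrightarrow{\ \sim\ }H^0(C,K_C),
\]
where the last map is \eqref{eq:sections-A} with $r=g-1$ combined with \eqref{eq:canonical}. The degree works out because $\ell+\ell'-2+b=g-1$ by \eqref{eq:basic-numerics}.

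The dimension count comes from this factorization. Multiplication of binary forms $\Sym^{\ell-1}V\otimes\Sym^{\ell'-1}V\to\Sym^{\ell+\ell'-2}V$ is surjective, since every monomial $x^{n-k}y^k$ with $0\le k\le \ell+\ell'-2$ splits as a product of monomials of the required degrees; this needs $\ell,\ell'\ge1$, which holds. Multiplication by the nonzero form $\tau$ is injective on a polynomial ring, and pullback is an isomorphism. So the image of $\mu_L$ has dimension $\dim\Sym^{\ell+\ell'-2}V=\ell+\ell'-1$.

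For the dual, we identify $\mu_L^\vee:H^0(C,K_C)^\vee\to H^0(L)^\vee\otimes H^0(K_C\otimes L^{-1})^\vee$ with the space of $\ell'\times\ell$ matrices, using the dual monomial bases indexed by $(i,j)$. Its image equals the image of $\mathrm{mult}^\vee$, because the composite $(\cdot\tau)^\vee\circ(\text{iso})$ is surjective onto $(\Sym^{\ell+\ell'-2}V)^\vee$: dual of an injection. A functional $\phi$ on $\Sym^{\ell+\ell'-2}V$, written $z_k=\phi(x^{n-k}y^k)$, pulls back to the matrix whose $(i,j)$ entry is $\phi(x^{n-i-j}y^{i+j})=z_{i+j}$. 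That is exactly $\Cat_{\ell',\ell}(z)$. As $\phi$ ranges over the dual space, the $z_k$ are arbitrary, so the image is the full space of rectangular catalecticant matrices.

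The only delicate point is the identification $\sigma_B\sigma_{\iota B}=\pi^*\tau$ up to a scalar, together with the compatibility of the $K_C\simeq A^{g-1}$ identification. Both hold because the two sides of $\sigma_B\sigma_{\iota B}=\pi^*\tau$ have the same divisor $B+\iota B=\pi^*\pi_*B$ and sections of line bundles are determined up to scalar by their divisors. Scalars do not affect the image statement.
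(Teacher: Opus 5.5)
Your proposal is correct and follows essentially the same route as the paper. You factor $\mu_L$ as multiplication of binary forms $\Sym^{\ell-1}V\otimes\Sym^{\ell'-1}V\to\Sym^{\ell+\ell'-2}V$, followed by the injective multiplication by the form corresponding to $\sigma_B\sigma_{\iota B}$, and then read the catalecticant pattern off the fact that the product of the monomials depends only on $i+j$. Your explicit check that the dual's image is the full catalecticant space, and your remark that the scalar ambiguities in the line-bundle identifications do not affect the image, spell out details the paper leaves implicit.
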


\begin{proof}
The sections
\[
 \sigma_B\in H^0(C,\OO_C(B)),\qquad
 \sigma_{\iota B}\in H^0(C,\OO_C(\iota B))
\]
are the canonical sections defined at the beginning of this section; in
particular,
\[
 \operatorname{div}(\sigma_B)=B,\qquad
 \operatorname{div}(\sigma_{\iota B})=\iota B.
\]
Their product corresponds, under
$\OO_C(B+\iota B)\simeq A^b$, to a nonzero binary form
\[
 \eta\in H^0(\PP^1,\OO(b))=\Sym^bV.
\]
Under \eqref{eq:sections-L} and \eqref{eq:sections-dual}, the Petri map
factors as
\begin{equation}\label{eq:petri-factor}
 \Sym^{\ell-1}V\otimes\Sym^{\ell'-1}V
 \xrightarrow{\mathrm{mult}}
 \Sym^{\ell+\ell'-2}V
 \xrightarrow{\cdot\eta}
 \Sym^{g-1}V
 \xrightarrow{\sim}H^0(C,K_C).
\end{equation}
Here
\[
 b+\ell+\ell'-2=g-1.
\]
Multiplication of binary forms in the first arrow is surjective, and
multiplication by the nonzero binary form $\eta$ is injective.  Hence
\[
 \dim\operatorname{Im}\mu_L
 =\dim\Sym^{\ell+\ell'-2}V
 =\ell+\ell'-1.
\]
If $X,Y$ is a basis of $V$, the product of
$X^{\ell-1-j}Y^j$ and $X^{\ell'-1-i}Y^i$ depends only on $i+j$.
Dualizing \eqref{eq:petri-factor} therefore gives precisely the
rectangular catalecticant pattern displayed in
\eqref{eq:rectangular-cat}.
\end{proof}

\section{A fixed extension as the pushed-down Poincar\'e family}

Consider  the restricted
Poincar\'e line bundle $\Lcal$ on $C\times S$ and its push-down $\Ecal$ on $\PP^1\times S$.

\begin{proposition}\label{propCrucial}
After shrinking $S$ around $L$, there is an exact sequence on
$\PP^1\times S$
\begin{equation}\label{eq:fixed-extension}
 0\longrightarrow\OO(-\ell'-1)
 \longrightarrow\Ecal
 \longrightarrow\OO(\ell-1)
 \longrightarrow0,
\end{equation}
whose restriction to the central fiber is the split sequence associated with
\eqref{eq:central-splitting}.  Here and below $\OO(a)$ denotes the pullback
of $\OO_{\PP^1}(a)$ to $\PP^1\times S$.
\end{proposition}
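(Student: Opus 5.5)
The plan is to build a family of surjections $\Ecal\to\OO(\ell-1)$ lifting the central projection, and then identify the kernel. By \cref{lem:central-splitting}, the restriction of $\Ecal$ to the central fiber $\PP^1\times\{L\}$ is $\pi_*L\simeq\OO(\ell-1)\oplus\OO(-\ell'-1)$. Let $\varphi_0:\pi_*L\to\OO_{\PP^1}(\ell-1)$ be the projection onto the first summand.

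\textbf{Step 1 (lifting $\varphi_0$).} Consider the vector bundle $\Fcal=\Ecal^\vee\otimes\OO(\ell-1)$ on $\PP^1\times S$. Its restriction to the central fiber is
\[
 \OO_{\PP^1}\oplus\OO_{\PP^1}(\ell+\ell'),
\]
which has vanishing $H^1$. By semicontinuity, after shrinking $S$ we have $H^1=0$ on every fiber. Cohomology and base change then shows that $q_*\Fcal$ is locally free and that its formation commutes with base change. Since $S$ is affine, the element $\varphi_0\in H^0(\PP^1,\Fcal|_{\PP^1\times\{L\}})$ lifts to a global section $\varphi\in H^0(\PP^1\times S,\Fcal)$, that is, to a morphism $\varphi:\Ecal\to\OO(\ell-1)$ restricting to $\varphi_0$ on the central fiber.

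\textbf{Step 2 (surjectivity).} The locus in $\PP^1\times S$ where $\varphi$ fails to be surjective is closed, being the support of $\operatorname{coker}\varphi$. Its image in $S$ is closed because $q$ is proper. This image does not contain $L$, since $\varphi_0$ is surjective. After shrinking $S$, $\varphi$ is therefore surjective everywhere, and its kernel $\Kcal$ is a line bundle on $\PP^1\times S$, a direct summand locally.

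\textbf{Step 3 (identifying $\Kcal$).} We have $\Kcal\simeq\det\Ecal\otimes\OO(1-\ell)$. On each fiber $\PP^1\times\{s\}$, the bundle $\Ecal|_s=\pi_*\Lcal_s$ has degree $d-g-1$, because $\chi$ is constant. Hence $\Kcal$ has fiberwise degree $d-g-1-(\ell-1)=-\ell'-1$, using $\ell'=g-d+\ell-1$. Therefore $\Kcal\otimes\OO(\ell'+1)$ is trivial on every fiber of $q$. By the seesaw principle, or equivalently $\Pic(\PP^1\times S)=\Pic(\PP^1)\times\Pic(S)$ for smooth $S$, it is isomorphic to $q^*M$ for some line bundle $M$ on $S$. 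Shrinking $S$ so that $M$ becomes trivial gives $\Kcal\simeq\OO(-\ell'-1)$, and hence the exact sequence \eqref{eq:fixed-extension}.

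\textbf{Step 4 (central fiber).} Since $\OO(\ell-1)$ is flat over $S$, the sequence stays exact after restriction to $\PP^1\times\{L\}$. There the kernel of $\varphi_0$ is exactly the summand $\OO(-\ell'-1)$ of \eqref{eq:central-splitting}. So the restricted sequence is the split sequence, once the trivialization of $M$ is rescaled by a nonzero constant so that the inclusion agrees with the summand inclusion; an automorphism of $\OO_{\PP^1}(-\ell'-1)$ is a scalar.

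The only step needing care is Step 1. It is the base-change argument guaranteeing that the central projection extends over a neighborhood, and it rests entirely on the vanishing $H^1(\PP^1,\OO\oplus\OO(\ell+\ell'))=0$. Everything else consists of open conditions and fiberwise degree bookkeeping.
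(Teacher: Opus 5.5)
Your proposal is correct and follows essentially the same route as the paper: you lift the central projection using cohomology and base change for $\Fcal=\Ecal^\vee\otimes\OO(\ell-1)$, discard the proper image of the locus where $\varphi$ fails to be surjective, and identify $\Kcal$ by its fiberwise degree. The differences are cosmetic: you use seesaw, where the paper uses base change for $q_*\Ncal$, to see that $\Kcal\otimes\OO(\ell'+1)$ is pulled back from $S$, and you spell out the scalar adjustment on the central fiber that the paper leaves implicit.
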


\begin{proof}
Consider
\[
 \Fcal=\Ecal^\vee\otimes\OO(\ell-1).
\]
On the central fiber, \cref{lem:central-splitting} gives
\[
 \begin{aligned}
 \Fcal_L
 &\simeq
 \bigl(\OO(1-\ell)\oplus\OO(\ell'+1)\bigr)
 \otimes\OO(\ell-1)\\
 &\simeq\OO\oplus\OO(\ell+\ell').
 \end{aligned}
\]
Thus $H^1(\PP^1,\Fcal_L)=0$.  By semicontinuity and cohomology and base
change, after shrinking $S$ one has
$R^1q_*\Fcal=0$, the sheaf $q_*\Fcal$ is locally free, and its formation
commutes with base change.

The central projection
\[
 q_0:\pi_*L\twoheadrightarrow\OO_{\PP^1}(\ell-1)
\]
is a point of the fiber
\[
 (q_*\Fcal)_L
 =\Hom_{\PP^1}\bigl(\pi_*L,\OO(\ell-1)\bigr).
\]
After trivializing $q_*\Fcal$ on a neighborhood of $L$, extend $q_0$ to a
regular section of $q_*\Fcal$.  Using
\[
\Gamma(S,q_*\Fcal)
=
\Gamma(\PP^1\times S,\Fcal)
=
\Hom_{\PP^1\times S}
\bigl(\Ecal,\OO(\ell-1)\bigr),
\]
 this section is a morphism
\begin{equation}\label{eq:extended-quotient}
 \varphi:\Ecal\longrightarrow\OO(\ell-1)
\end{equation}
whose central restriction is $q_0$.

Let $Z\subset\PP^1\times S$ be the closed locus where $\varphi$ is not
surjective.  It is disjoint from the central fiber.  Since $q$ is proper,
$q(Z)$ is closed in $S$ and does not contain $L$.  Replacing $S$ by
$S\setminus q(Z)$ makes \eqref{eq:extended-quotient} surjective everywhere.
Its kernel $\Kcal$ is therefore a line bundle, and
\[
 0\longrightarrow\Kcal\longrightarrow\Ecal
 \xrightarrow{\varphi}\OO(\ell-1)\longrightarrow0
\]
is exact.

Every fiber $\Ecal_s$ has degree $d-g-1$, because
\[
 \deg(\Ecal_s)+2=\chi(\Ecal_s)=\chi(\Lcal_s)=d+1-g.
\]
It follows that
\[
 \deg\Kcal_s=(d-g-1)-(\ell-1)=-\ell'-1.
\]
Set
\[
 \Ncal=\Kcal\otimes\OO(\ell'+1).
\]
Each $\Ncal_s$ is a degree-zero line bundle on $\PP^1$, hence is trivial.
Cohomology and base change shows that $\Mcal=q_*\Ncal$ is a line bundle on
$S$, that $R^1q_*\Ncal=0$, and that the evaluation morphism
\[
 q^*\Mcal\longrightarrow\Ncal
\]
is an isomorphism on every fiber and therefore an isomorphism.  After
shrinking $S$, trivialize $\Mcal$.  Then
$\Kcal\simeq\OO(-\ell'-1)$, which proves \eqref{eq:fixed-extension}.
\end{proof}

\section{The extension class and the catalecticant boundary map}

Put
$
 W=H^1(\PP^1,\OO(-\ell-\ell')).
$
Since $\ell,\ell'\geq1$, one has
\begin{equation}\label{eq:W-dimension}
 \dim W=\ell+\ell'-1.
\end{equation}

\begin{lemma}
The sequence \eqref{eq:fixed-extension} is determined by a regular map
\begin{equation}\label{eq:xi-map}
 \xi:S\longrightarrow W
\end{equation}
with $\xi(L)=0$.
\end{lemma}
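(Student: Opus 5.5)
We have to show that the extension \eqref{eq:fixed-extension} on $\PP^1\times S$ is classified by a regular map $\xi:S\to W$ vanishing at $L$. The approach is the relative version of the usual $\Ext^1$ classification of extensions. Extensions of $\OO(\ell-1)$ by $\OO(-\ell'-1)$ on $\PP^1\times S$ are classified by
\[
 \Ext^1_{\PP^1\times S}\bigl(\OO(\ell-1),\OO(-\ell'-1)\bigr)
 \simeq H^1\bigl(\PP^1\times S,\OO(-\ell-\ell')\bigr).
\]
The isomorphism holds because both sheaves are line bundles, so the $\Ext$ group is the cohomology of the twist $\OO(-\ell'-1)\otimes\OO(1-\ell)=\OO(-\ell-\ell')$.

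Since $S$ is affine, the Leray spectral sequence for $q$ degenerates: $R^iq_*$ of a quasi-coherent sheaf has vanishing higher cohomology on $S$. Hence
\[
 H^1\bigl(\PP^1\times S,\OO(-\ell-\ell')\bigr)=\Gamma\bigl(S,R^1q_*\OO(-\ell-\ell')\bigr).
\]
By flat base change along $S\to\Spec\CC$, the sheaf $R^1q_*$ of the pulled-back bundle is
\[
 R^1q_*\OO(-\ell-\ell')\simeq H^1\bigl(\PP^1,\OO(-\ell-\ell')\bigr)\otimes_\CC\OO_S=W\otimes\OO_S.
\]
Therefore the extension class is a global section of the trivial bundle $W\otimes\OO_S$. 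This is exactly a regular map $\xi:S\to W$, and it determines the sequence up to isomorphism of extensions. The Künneth formula gives the same identification, $H^1(\PP^1\times S,\OO(a))=W\otimes\Gamma(S,\OO_S)$, because $H^0(\PP^1,\OO(-\ell-\ell'))=0$.

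It remains to check compatibility with restriction to fibers. The identification above commutes with base change to a point $s\in S$: $R^1q_*$ is locally free and $R^2=0$, so cohomology and base change applies, or one can simply use the Künneth description. Hence $\xi(s)$ is the extension class of the restricted sequence on $\PP^1\times\{s\}$. By \cref{propCrucial} the central restriction is split, which gives $\xi(L)=0$.

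The only delicate point is this base-change compatibility of extension classes, i.e.\ that restricting the global class to the fiber gives the class of the restricted extension. I would settle it with Čech cocycles for the standard cover of $\PP^1$ times $S$. On each open $U_i\times S$ the extension splits, because $\OO(\ell-1)$ is free there and $S$ is affine. The difference of the two local splittings is a Čech 1-cocycle with values in $\OO(-\ell-\ell')$, i.e.\ an element of $\Gamma(S,\OO_S)\otimes W$ modulo coboundaries. This cocycle visibly restricts to the cocycle of the restricted extension, so $\xi$ is regular and its value at each $s$ is the fiberwise class.
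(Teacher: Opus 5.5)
Your proposal is correct and follows essentially the same route as the paper. You identify $\Ext^1$ with $H^1(\PP^1\times S,\OO(-\ell-\ell'))$, use affineness of $S$ and Leray to rewrite this as $\Gamma(S,W\otimes\OO_S)$, and read off $\xi(L)=0$ from the split central fiber; your extra \v{C}ech/base-change paragraph only makes explicit the compatibility of the class with restriction to fibers, which the paper uses tacitly in its final sentence.
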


\begin{proof}
Because $S$ is affine and the two line bundles in
\eqref{eq:fixed-extension} are pulled back from $\PP^1$, Leray gives
\[
 \begin{aligned}
 \Ext^1_{\PP^1\times S}
 \bigl(\OO(\ell-1),\OO(-\ell'-1)\bigr)
 &\simeq H^1(\PP^1\times S,\OO(-\ell-\ell'))\\
 &\simeq H^0\bigl(S,R^1q_*\OO(-\ell-\ell')\bigr)\\
 &\simeq H^0(S,W\otimes\OO_S).
 \end{aligned}
\]
Thus the extension class is a regular section of the trivial vector bundle
$W\otimes\OO_S$, equivalently a regular map \eqref{eq:xi-map}.  The central
sequence is split, so $\xi(L)=0$.
\end{proof}

Choose a basis $X,Y$ of $V=H^0(\PP^1,\OO(1))$.  Serre duality gives
identifications
\begin{align*}
 H^0(\PP^1,\OO(\ell-1))&=\Sym^{\ell-1}V,
 \\
 H^1(\PP^1,\OO(-\ell'-1))&\simeq
 (\Sym^{\ell'-1}V)^\vee,
 \\
 W&\simeq(\Sym^{\ell+\ell'-2}V)^\vee.
\end{align*}
For $0\leq r\leq\ell+\ell'-2$, let $\epsilon_r\in W$ be dual to
$X^{\ell+\ell'-2-r}Y^r$.  Write
\begin{equation}\label{eq:xi-coordinates}
 \xi=\sum_{r=0}^{\ell+\ell'-2}\widetilde z_r\epsilon_r,
\end{equation}
where $\widetilde z_r\in\Gamma(S,\OO_S)$ and
$\widetilde z_r(L)=0$.  At this stage these are merely the coefficient
functions of the extension-class map $\xi$; they have not yet been
identified with the model coordinates $z_r$ introduced in
\eqref{eq:rectangular-cat}.
For brevity, write
\[
 \widetilde z=(\widetilde z_0,\ldots,
 \widetilde z_{\ell+\ell'-2}).
\]

Apply $\Rq$ to \eqref{eq:fixed-extension}.  Since
\[
 H^0(\PP^1,\OO(-\ell'-1))=0,
 \qquad
 H^1(\PP^1,\OO(\ell-1))=0,
\]
the result is represented by a two-term complex
\begin{equation}\label{eq:boundary-complex}
 H^0(\PP^1,\OO(\ell-1))\otimes\OO_S
 \xrightarrow{\ D\ }
 H^1(\PP^1,\OO(-\ell'-1))\otimes\OO_S,
\end{equation}
placed in cohomological degrees $0$ and $1$.  We identify $D$ next as
a connecting homomorphism and explain why this is the same as Yoneda
multiplication by $\xi$.

Set
\[
 \Acal=\OO(-\ell'-1),\qquad
 \Qcal=\OO(\ell-1).
\]
The short exact sequence \eqref{eq:fixed-extension} has extension class
\[
 \xi\in\Ext^1_{\PP^1\times S}(\Qcal,\Acal).
\]
Applying $\Rq$ gives the distinguished triangle
\begin{equation}\label{eq:boundary-triangle}
 \Rq\Acal\longrightarrow\Rq\Ecal\longrightarrow\Rq\Qcal
 \xrightarrow{\ \partial_\xi\ }\Rq\Acal[1].
\end{equation}
The displayed vanishings and cohomology and base change identify
\[
 \Rq\Qcal\simeq
 H^0(\PP^1,\OO(\ell-1))\otimes\OO_S
\]
in degree $0$, and
\[
 \Rq\Acal[1]\simeq
 H^1(\PP^1,\OO(-\ell'-1))\otimes\OO_S
\]
in degree $0$.  Choose the harmless sign in the identification of
$\Rq\Acal[1]$ with its degree-zero cohomology sheaf so that the differential
in the resulting two-term representative is the connecting morphism itself,
and define
\[
 D=\partial_\xi.
\]
With this convention, the triangle \eqref{eq:boundary-triangle} represents
$\Rq\Ecal$ by \eqref{eq:boundary-complex}.  Equivalently, the relative long exact
cohomology sequence is
\[
 0\longrightarrow q_*\Ecal\longrightarrow q_*\Qcal
 \xrightarrow{D}R^1q_*\Acal
 \longrightarrow R^1q_*\Ecal\longrightarrow0.
\]

The description by Yoneda multiplication is equally concrete.  Let
$U\subseteq S$ be affine and let
\[
 \sigma\in\Gamma(U,q_*\Qcal)
 =\Hom_{\PP^1\times U}
 (\OO_{\PP^1\times U},\Qcal|_{\PP^1\times U}).
\]
Pulling \eqref{eq:fixed-extension} back along $\sigma$  in the abelian category of sheaves of $\OO_{\PP^1\times U}$-modules gives an extension
\[
 0\longrightarrow\Acal|_{\PP^1\times U}
 \longrightarrow\Ecal_\sigma
 \longrightarrow\OO_{\PP^1\times U}
 \longrightarrow0.
\]
Its class is the Yoneda composite
\[
 \xi|_U\circ\sigma\in
 \Ext^1_{\PP^1\times U}
 (\OO_{\PP^1\times U},\Acal|_{\PP^1\times U}).
\]
Naturality of the connecting morphism identifies this pullback-extension
class with $\partial_\xi(\sigma)$.  Thus
\begin{equation}\label{eq:D-Yoneda}
 D(\sigma)=\xi|_U\circ\sigma.
\end{equation}
In particular, on the fiber over $s\in S$,
\[
 D_s(v)=\xi(s)\smile v
 \in H^1(\PP^1,\OO(-\ell'-1)),
 \qquad v\in H^0(\PP^1,\OO(\ell-1)),
\]
where cup product agrees with Yoneda composition under the standard
cohomology--Ext identifications.

\begin{proposition}[The all-orders catalecticant identity]
\label{prop:boundary-cat}
Let
\begin{align*}
 e_j&:=X^{\ell-1-j}Y^j\otimes 1
 \in H^0(\PP^1,\OO(\ell-1))\otimes\OO_S
 &&(0\leq j\leq\ell-1),\\
 f_i&:=X^{\ell'-1-i}Y^i\otimes 1
 \in \Sym^{\ell'-1}V\otimes\OO_S
 &&(0\leq i\leq\ell'-1),
\end{align*}
where $1$ is the unit section of $\OO_S$.  Use the $\OO_S$-bilinear
extension of the Serre-duality pairing to pair the target of
\eqref{eq:boundary-complex} with $\Sym^{\ell'-1}V\otimes\OO_S$.  Then
\[
 \langle D(e_j),f_i\rangle=\widetilde z_{i+j}.
\]
Consequently,
\[
 D=\Cat_{\ell',\ell}(\widetilde z)
\]
as an identity of matrices of regular functions on $S$.  In particular,
$D_{i+1,j}=D_{i,j+1}$ whenever both sides are defined.
\end{proposition}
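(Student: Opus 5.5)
The plan is to compute $\langle D(e_j),f_i\rangle$ pointwise and globally over $S$, using the Yoneda description \eqref{eq:D-Yoneda}. By \eqref{eq:D-Yoneda}, $D(e_j)=\xi\smile e_j$, where $\xi=\sum_r\widetilde z_r\epsilon_r$ is viewed as an $\OO_S$-linear combination of constant classes $\epsilon_r\in W=H^1(\PP^1,\OO(-\ell-\ell'))$. Cup product is $\OO_S$-bilinear, and all sheaves involved are pulled back from $\PP^1$. So the first step is to reduce to the constant computation
\[
 \langle D(e_j),f_i\rangle=\sum_r \widetilde z_r\,\langle \epsilon_r\smile X^{\ell-1-j}Y^j,\;X^{\ell'-1-i}Y^i\rangle .
\]
This reduction rests on flat base change $R^1q_*\OO(a)\simeq H^1(\PP^1,\OO(a))\otimes\OO_S$ together with the compatibility of cup product with this identification.

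The second step is the key compatibility on $\PP^1$: Serre duality is multiplicative. Concretely, for $\alpha\in H^1(\OO(-\ell-\ell'))$, $v\in H^0(\OO(\ell-1))$ and $w\in H^0(\OO(\ell'-1))$, we have
\[
 \langle\alpha\smile v,w\rangle_{\OO(-\ell'-1)}=\operatorname{tr}(\alpha\smile v\smile w)=\langle\alpha,vw\rangle_{\OO(-\ell-\ell')}.
\]
This holds because both pairings are cup product into $H^1(\PP^1,K)=H^1(\OO(-2))$ followed by the trace, and cup product is associative and graded commutative (with $H^0$ classes of degree zero, so no sign issue). Here I must check that the identifications $H^1(\OO(-\ell'-1))\simeq(\Sym^{\ell'-1}V)^\vee$ and $W\simeq(\Sym^{\ell+\ell'-2}V)^\vee$ fixed before \eqref{eq:xi-coordinates} are exactly these trace pairings. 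The paper defines them this way, so this is a convention check.

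The third step applies this with $v w=X^{\ell-1-j}Y^j\cdot X^{\ell'-1-i}Y^i=X^{\ell+\ell'-2-(i+j)}Y^{i+j}$. Since $\epsilon_r$ is dual to $X^{\ell+\ell'-2-r}Y^r$, the pairing $\langle\epsilon_r,vw\rangle$ equals $\delta_{r,i+j}$, and the sum collapses to $\widetilde z_{i+j}$. Writing $D$ in the basis $(e_j)$ and the dual basis of $(f_i)$ then gives $D=\Cat_{\ell',\ell}(\widetilde z)$ entrywise by \eqref{eq:rectangular-cat}. The identity $D_{i+1,j}=D_{i,j+1}$ follows because both sides equal $\widetilde z_{i+j+1}$.

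I expect the main obstacle to be the sign and convention bookkeeping in the first step rather than any real mathematics. Two things must agree with the pointwise cup product for all $s\in S$ simultaneously: the identification of $\partial_\xi$ with Yoneda composition $\xi\circ\sigma$ (including the chosen sign for $\Rq\Acal[1]$), and the identification of $\Ext^1(\OO,\Acal)$ with $H^1$. I would handle this by working over an affine $U$ with a Čech cover of $\PP^1$ by two standard opens. There the extension class $\xi$ is a Čech 1-cocycle $\sum_r\widetilde z_r c_r$ with $c_r$ constant Laurent monomials. The connecting map sends $\sigma$ to the cocycle $\xi\cdot\sigma$, which makes $\OO_S$-linearity and the monomial calculation completely explicit. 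Any global sign ambiguity is absorbed into the convention fixed in the text.
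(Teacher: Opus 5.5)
Your proposal is correct and follows essentially the same route as the paper. Both arguments invoke \eqref{eq:D-Yoneda}, use compatibility of cup product with Serre duality to get $\langle D(e_j),f_i\rangle=\langle\xi,e_jf_i\rangle$ with $e_jf_i=X^{\ell+\ell'-2-(i+j)}Y^{i+j}$, and read off $\widetilde z_{i+j}$ from the dual basis $\epsilon_r$; your \v{C}ech check of the sign and base-change conventions is only extra detail beyond the paper, which takes \eqref{eq:D-Yoneda} and the $\OO_S$-bilinear Serre pairing as given.
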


\begin{proof}
By \eqref{eq:D-Yoneda}, $D(e_j)$ is Yoneda multiplication of the constant
section $e_j$ by the extension class $\xi$.  Pairing with the constant
section $f_i$ and using the $\OO_S$-bilinear extension of Serre duality gives
\[
 \langle D(e_j),f_i\rangle=\langle\xi,e_jf_i\rangle.
\]
Here the product on the right is formed using the $\OO_S$-bilinear
extension of the multiplication map on binary forms.  Hence
\[
 e_jf_i=X^{\ell+\ell'-2-(i+j)}Y^{i+j}\otimes1.
\]
The definition of the dual basis $\epsilon_r$ and
\eqref{eq:xi-coordinates} now give
$\langle\xi,e_jf_i\rangle=\widetilde z_{i+j}$.
\end{proof}

\section{The standard Brill--Noether filtration and the simultaneous normal form}

We finish in this section the proof of \cref{thm:main}.  We first compare the boundary complex with differential $D$ with the standard determinantal
construction of every $W^r_d(C)$ through $L$.

\subsection{The standard schemes and their higher Fitting ideals}
Choose an effective divisor $\Gamma$ on $C$ of degree $N$ so large that
$d+N>2g-2$, and put $\Gamma_S=\Gamma\times S$.  After shrinking $S$,
$H^1(C,L_s(\Gamma))=0$ for every $s\in S$.  Cohomology and base change gives
vector bundles
\begin{align*}
 \mathcal E^0_\Gamma&=p_*\Lcal(\Gamma_S),
 \\
 \mathcal E^1_\Gamma&=
 p_*\bigl(\Lcal(\Gamma_S)|_{\Gamma_S}\bigr).
\end{align*}
Restriction to $\Gamma_S$ defines
\[
 \rho_\Gamma:\mathcal E^0_\Gamma\longrightarrow
 \mathcal E^1_\Gamma.
\]
Put
\[
 c=g-d-1=\ell'-\ell,\qquad
 e=N-c=N+d+1-g.
\]
Riemann--Roch gives
\[
 \rank\mathcal E^0_\Gamma=e,\qquad
 \rank\mathcal E^1_\Gamma=N.
\]
Pushing forward
\[
 0\longrightarrow\Lcal\longrightarrow\Lcal(\Gamma_S)
 \longrightarrow\Lcal(\Gamma_S)|_{\Gamma_S}\longrightarrow0
\]
gives an exact sequence
\begin{equation}\label{eq:ACGH-exact}
 0\longrightarrow p_*\Lcal
 \longrightarrow\mathcal E^0_\Gamma
 \xrightarrow{\rho_\Gamma}\mathcal E^1_\Gamma
 \longrightarrow R^1p_*\Lcal\longrightarrow0.
\end{equation}
For $s\in S$, the kernel of $\rho_\Gamma(s)$ is $H^0(C,L_s)$.
Consequently, for every $0\leq r\leq\ell-1$, the condition
$h^0(C,L_s)\geq r+1$ is the rank condition
\[
 \rank\rho_\Gamma(s)\leq e-r-1.
\]
Following \cite[Chapter IV, \S3]{ACGH}, the standard scheme $W^r_d(C)\cap S$
is therefore defined by
\begin{equation}\label{eq:standard-ideal-r}
 I_{e-r}(\rho_\Gamma).
\end{equation}
This construction is independent of the sufficiently positive divisor
$\Gamma$ and of the auxiliary trivializations; see
\cite[Chapter IV, Remark~3.2]{ACGH}.

For a finite presentation
\[
 F_1\xrightarrow{\delta}F_0\longrightarrow M\longrightarrow0,
 \qquad \rank F_0=q,
\]
we use the convention
\[
 \Fitt_k(M)=I_{q-k}(\delta).
\]
It is well-known that Fitting ideals are independent of the chosen finite presentation.  From \eqref{eq:ACGH-exact},
\[
 \operatorname{coker}\rho_\Gamma\simeq R^1p_*\Lcal.
\]
On the other hand, the complex \eqref{eq:boundary-complex} represents
$\Rp\Lcal$ by \eqref{eq:finite-push} and
\eqref{eq:boundary-triangle}; hence
\[
 \operatorname{coker}D\simeq R^1p_*\Lcal.
\]
For $0\leq r\leq\ell-1$, the same Fitting ideal computed from these two
presentations is
\begin{equation}\label{eq:Fitting-comparison-r}
 \begin{aligned}
 I_{e-r}(\rho_\Gamma)
 &=\Fitt_{c+r}(R^1p_*\Lcal)\\
 &=I_{\ell-r}(D).
 \end{aligned}
\end{equation}
Indeed, the target ranks in the two presentations are $N$ and $\ell'$, and
\[
 N-(c+r)=e-r,\qquad
 \ell'-(c+r)=\ell-r.
\]

\begin{proposition}
\label{prop:standard-ideals}
For every $0\leq r\leq\ell-1$, the ideal of the standard Brill--Noether
scheme $W^r_d(C)\cap S$ is
$
 I_{\ell-r}(D).
$
In the extension functions of \eqref{eq:xi-coordinates}, this is
$
 I_{\ell-r}\bigl(\Cat_{\ell',\ell}(\widetilde z)\bigr).
$
All these equalities hold on the same neighborhood $S$ and for the same
matrix $D$.
\end{proposition}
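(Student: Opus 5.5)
The plan is to assemble three facts already established: the ACGH definition \eqref{eq:standard-ideal-r}, the Fitting-ideal comparison \eqref{eq:Fitting-comparison-r}, and the catalecticant identity of \cref{prop:boundary-cat}.

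\textbf{Two presentations of $R^1p_*\Lcal$ on one neighborhood.} First I fix a single affine neighborhood $S$ of $L$, small enough that all the shrinkings made so far hold at once. These are the cohomology-and-base-change conditions in \cref{propCrucial}, the trivializations used to define $\xi$, and the vanishing $H^1(C,L_s(\Gamma))=0$ used for $\rho_\Gamma$. Finitely many shrinkings of an affine neighborhood can always be replaced by a common affine neighborhood, and both the matrix $D$ and the map $\rho_\Gamma$ restrict compatibly to open subsets. On this $S$ I then exhibit two finite presentations of the same module $R^1p_*\Lcal$.
\begin{itemize}
\item The first comes from \eqref{eq:ACGH-exact}: $\mathcal E^0_\Gamma\xrightarrow{\rho_\Gamma}\mathcal E^1_\Gamma\to R^1p_*\Lcal\to0$.
\item The second comes from \eqref{eq:finite-push} together with the triangle \eqref{eq:boundary-triangle}. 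These identify $\Rp\Lcal$ with the two-term complex \eqref{eq:boundary-complex}. The complex has no terms in degree $2$, so taking $H^1$ gives $\operatorname{coker}D\simeq R^1p_*\Lcal$.
\end{itemize}

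\textbf{Comparing Fitting ideals.} The main point is to justify that the Fitting ideals may be compared, since the two presentations have different sizes. Fitting ideals of a finitely presented module do not depend on the chosen presentation, once the index shift is normalized by the rank of the target. I check the indexing as in \eqref{eq:Fitting-comparison-r}. The target ranks are $N$ and $\ell'$, and $N-(c+r)=e-r$, $\ell'-(c+r)=\ell-r$. This gives $I_{e-r}(\rho_\Gamma)=\Fitt_{c+r}(R^1p_*\Lcal)=I_{\ell-r}(D)$ for each $0\le r\le\ell-1$. By \eqref{eq:standard-ideal-r}, the left side is the ideal of $W^r_d(C)\cap S$. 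This proves the first claim.

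\textbf{Catalecticant form and simultaneity.} Next I substitute $D=\Cat_{\ell',\ell}(\widetilde z)$ from \cref{prop:boundary-cat}. This is an identity of matrices over $\Gamma(S,\OO_S)$, so the ideals of $(\ell-r)$-minors coincide, giving $I_{\ell-r}(\Cat_{\ell',\ell}(\widetilde z))$.

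Simultaneity in $r$ is automatic. Neither $S$ nor $D$ was chosen depending on $r$, and the ACGH ideal is independent of $\Gamma$ by \cite[Chapter IV, Remark~3.2]{ACGH}. So a single $\Gamma$ and a single $S$ serve for all $r$ at once.

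The only real care needed is the bookkeeping in \eqref{eq:Fitting-comparison-r}: one must verify that the rank normalizations agree, so that the \emph{same} Fitting index $c+r$ appears on both sides.
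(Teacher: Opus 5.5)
Your proposal is correct and follows essentially the same route as the paper. You identify the ACGH ideal $I_{e-r}(\rho_\Gamma)$ with $\Fitt_{c+r}(R^1p_*\Lcal)=I_{\ell-r}(D)$ through the two presentations of $R^1p_*\Lcal$, substitute $D=\Cat_{\ell',\ell}(\widetilde z)$ from \cref{prop:boundary-cat}, and get simultaneity because neither $S$ nor $D$ depends on $r$; your remarks on a common affine neighborhood and on independence of $\Gamma$ only make explicit what the paper leaves implicit.
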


\begin{proof}
The standard determinantal definition gives the ideal
$I_{e-r}(\rho_\Gamma)$.  Equality \eqref{eq:Fitting-comparison-r} identifies
it with $I_{\ell-r}(D)$, and \cref{prop:boundary-cat} identifies $D$ with
the rectangular catalecticant matrix.  Since neither comparison changes
$S$ or $D$ with $r$, the assertion is simultaneous.
\end{proof}


\subsection{First variation and the Petri map}

The central splitting fixes identifications
\begin{equation}\label{eq:central-cohomology}
 H^0(\PP^1,\OO(\ell-1))\simeq H^0(C,L),\qquad
 H^1(\PP^1,\OO(-\ell'-1))\simeq H^1(C,L).
\end{equation}

\begin{lemma}
\label{lem:first-variation}
Under \eqref{eq:central-cohomology} and
$T_L\Pic^d(C)=H^1(C,\OO_C)$, the differential of $D$ is
\begin{equation}\label{eq:cup-map}
 dD_L:H^1(C,\OO_C)\longrightarrow
 \Hom\bigl(H^0(C,L),H^1(C,L)\bigr),
 \qquad
 dD_L(\beta)(s)=\beta\smile s.
\end{equation}
Its Serre dual is the Petri multiplication map \eqref{eq:petri-map}.
\end{lemma}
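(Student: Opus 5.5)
The strategy is to compute the differential of $D$ at $L$ by first-order deformation theory on $\PP^1$, and then to compare it with the classical first-order variation on $C$.

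First restrict the family to a tangent vector. Let $\beta\in H^1(C,\OO_C)=T_L\Pic^d(C)$, and let $\operatorname{Spec}\CC[\epsilon]\to S$ be the corresponding morphism. Pull back $\Lcal$ to obtain a first-order deformation $L_\epsilon$ of $L$ on $C\times\operatorname{Spec}\CC[\epsilon]$; its Kodaira--Spencer class is $\beta$.

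Next, use the fact that the formation of $D$ commutes with base change. The ingredients $\Rq$, cohomology and base change, and the connecting morphism are all compatible with base change. So the restriction of $D$ to $\CC[\epsilon]$ is the connecting map of the pulled-back extension \eqref{eq:fixed-extension}. Since $\xi(L)=0$, that map is $\epsilon\,d\xi_L(\beta)\smile-$. Consequently $dD_L(\beta)$ is the boundary map of the complex representing $\mathbf R\Gamma(C\times\operatorname{Spec}\CC[\epsilon],L_\epsilon)$, taken modulo $\epsilon^2$ and restricted to the $\epsilon$-part.

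The standard fact we then invoke is the following. For a first-order deformation $L_\epsilon$ with class $\beta$, consider the long exact sequence of
\[
0\to \epsilon L\to L_\epsilon\to L\to0 .
\]
Its connecting map $H^0(C,L)\to H^1(C,L)$ is cup product with $\beta$ (the obstruction to lifting sections). Using \eqref{eq:finite-push}, the pushforward $\pi_*$ is exact, and under the identifications \eqref{eq:central-cohomology} this sequence corresponds on $\PP^1$ to
\[
0\to\epsilon\pi_*L\to\pi_*L_\epsilon\to\pi_*L\to0 .
\]
Its connecting map, restricted to $H^0(\OO(\ell-1))$ and projected to $H^1(\OO(-\ell'-1))$, is what $dD_L(\beta)$ computes.

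The main obstacle is this compatibility. The identifications \eqref{eq:central-cohomology} use the central splitting, while $D$ is defined through the chosen extension $\varphi$. I would handle it as follows. A section $s\in H^0(\OO(\ell-1))=H^0(C,L)$ lifts to first order in $\Ecal$ exactly when its image under $D$ vanishes. The two-term complex \eqref{eq:boundary-complex} represents $\Rq\Ecal$, so its cohomology matches that of $\Rp\Lcal$ compatibly with base change. It follows that the first-order obstruction maps agree, up to the automorphisms fixed by the central splitting; the first-order parts of any reparametrization of $\varphi$ contribute only terms that vanish at $L$, because $D(L)=0$. This gives
\[
dD_L(\beta)(s)=\beta\smile s.
\]

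Finally, for the Serre dual statement, dualize the map $\beta\mapsto(s\mapsto\beta\smile s)$. Serre duality identifies $H^1(C,L)^\vee$ with $H^0(K_C\otimes L^{-1})$ and $H^1(\OO_C)^\vee$ with $H^0(K_C)$. The pairing $\langle\beta\smile s,t\rangle$ equals $\langle\beta,st\rangle$ by associativity of cup product and compatibility with the trace map. Therefore the dual of $dD_L$ is $s\otimes t\mapsto st$, which is $\mu_L$ from \eqref{eq:petri-map}.
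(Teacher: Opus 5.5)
Your proposal follows the paper's proof: pull back along the tangent arc $\eta$, use that the two-term complex is a locally free representative of $\Rp\Lcal$ so that $\eta^*D=\varepsilon\,dD_L(\beta)$, identify $dD_L(\beta)$ with the connecting map of $0\to L\to\Lcal_\beta\to L\to0$ (which is $\beta\smile-$), and dualize via $\langle\beta\smile s,t\rangle=\langle\beta,st\rangle$. Your ``main obstacle'' paragraph is the one loose spot. Equal kernels do not identify two maps, agreement only up to automorphisms would not give the stated equality, and reparametrizing $\varphi$ is irrelevant since $D$ is fixed. The paper closes this step directly: tensoring the complex with $0\to\CC\xrightarrow{\varepsilon}R_\varepsilon\to\CC\to0$ gives a short exact sequence of complexes whose connecting map is visibly $dD_L(\beta)$ (lift $s$ to $s\otimes1$ and apply $\varepsilon\,dD_L(\beta)$), and derived base change matches this sequence with the one coming from the line-bundle extension.
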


\begin{proof}
Let $R_\varepsilon=\CC[\varepsilon]/(\varepsilon^2)$ and let
$\eta_\beta:\Spec R_\varepsilon\to S$ be the tangent arc corresponding to
$\beta\in H^1(C,\OO_C)$.  Pulling back the Poincar\'e family gives an
extension
\begin{equation}\label{eq:first-order-extension}
 0\longrightarrow L\xrightarrow{\varepsilon}\Lcal_\beta
 \longrightarrow L\longrightarrow0
\end{equation}
whose class in $\Ext^1_C(L,L)=H^1(C,\OO_C)$ is $\beta$.

Because \eqref{eq:boundary-complex} is a finite locally free representative
of $\Rp\Lcal$, derived base change to $\Spec R_\varepsilon$ identifies its
pullback with $\RGamma(C,\Lcal_\beta)$.  Under
\eqref{eq:central-cohomology}, the pulled-back complex is
\[
 \left[
 H^0(C,L)\otimes R_\varepsilon
 \xrightarrow{\eta_\beta^*D}
 H^1(C,L)\otimes R_\varepsilon
 \right].
\]
The central extension \eqref{eq:fixed-extension} is split, so $D(L)=0$ and
$
 \eta_\beta^*D=\varepsilon\,dD_L(\beta).
$
Multiplication by $\varepsilon$ and reduction modulo $\varepsilon$ give a
short exact sequence of complexes whose outer terms are
$[H^0(C,L)\xrightarrow{0}H^1(C,L)]$.  Its connecting map from degree zero
to degree one is $dD_L(\beta)$: a lift of $s\in H^0(C,L)$ has differential
$\varepsilon dD_L(\beta)(s)$, which corresponds to
$dD_L(\beta)(s)$ under
$\varepsilon H^1(C,L)\simeq H^1(C,L)$.

Under the derived-base-change identification, this is also the connecting
homomorphism of the line-bundle extension
\eqref{eq:first-order-extension}.  That connecting homomorphism is Yoneda,
equivalently cup, multiplication by the extension class $\beta$.  This
proves \eqref{eq:cup-map}.  If
$t\in H^0(C,K_C\otimes L^{-1})$, compatibility of cup product with Serre
duality gives
$
 \langle dD_L(\beta)(s),t\rangle=\langle\beta,st\rangle.
$
Thus the Serre dual of $dD_L$ is the Petri multiplication map.
\end{proof}

Let
\[
 \operatorname{cat}_{\ell',\ell}:W\longrightarrow
 \Hom\bigl(H^0(C,L),H^1(C,L)\bigr)
\]
be the linear map sending an extension class to its boundary homomorphism,
using \eqref{eq:central-cohomology}.  Its dual is multiplication
$
 \Sym^{\ell-1}V\otimes\Sym^{\ell'-1}V
 \longrightarrow\Sym^{\ell+\ell'-2}V,
$
which is surjective.  Hence $\operatorname{cat}_{\ell',\ell}$ is injective.
By construction,
\begin{equation}\label{eq:D-factor-xi}
 D=\operatorname{cat}_{\ell',\ell}\circ\xi.
\end{equation}

\begin{proposition}\label{prop:xi-submersion}
The differential
$
 d\xi_L:H^1(C,\OO_C)\longrightarrow W
$
is surjective.  Equivalently,
$
 d\widetilde z_{0,L}$, $\ldots$, $d\widetilde z_{\ell+\ell'-2,L}
$
are linearly independent.
\end{proposition}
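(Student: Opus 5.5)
The plan is to differentiate the factorization \eqref{eq:D-factor-xi} at $L$ and compare it with \cref{lem:first-variation}.

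Since $\operatorname{cat}_{\ell',\ell}$ is linear and $\xi(L)=0$, the chain rule applied to $D=\operatorname{cat}_{\ell',\ell}\circ\xi$ gives
\[
 dD_L=\operatorname{cat}_{\ell',\ell}\circ d\xi_L
 :H^1(C,\OO_C)\longrightarrow\Hom\bigl(H^0(C,L),H^1(C,L)\bigr).
\]
One should check that the identifications used for $D$ near $L$ and for $\operatorname{cat}_{\ell',\ell}$ are the same. Both are made through \eqref{eq:central-cohomology} and the constant trivializations of \eqref{eq:boundary-complex}, so they agree.

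By \cref{lem:first-variation}, $dD_L$ is Serre dual to the Petri map $\mu_L$. By \cref{prop:petri}, the image of $\mu_L$ has dimension $\ell+\ell'-1$. Hence $\operatorname{rank} dD_L=\ell+\ell'-1$, since a linear map and its dual have the same rank.

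Now
\[
 \operatorname{rank} dD_L\le\operatorname{rank} d\xi_L\le\dim W=\ell+\ell'-1,
\]
using \eqref{eq:W-dimension}. All these quantities are therefore equal, so $d\xi_L$ is surjective. Alternatively, $\operatorname{cat}_{\ell',\ell}$ is injective, so $\operatorname{rank} d\xi_L=\operatorname{rank} dD_L$ directly.

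For the equivalent formulation, write $d\xi_L=\sum_r d\widetilde z_{r,L}\,\epsilon_r$ using \eqref{eq:xi-coordinates}. Since the $\epsilon_r$ form a basis of $W$, surjectivity of $d\xi_L$ is the same as linear independence of the covectors $d\widetilde z_{r,L}\in H^1(C,\OO_C)^\vee$.

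The only delicate point is justifying the differentiation of \eqref{eq:D-factor-xi} with compatible identifications of the tangent space $T_L\Pic^d(C)=H^1(C,\OO_C)$ and of the fibers. I expect this to be routine: \eqref{eq:D-factor-xi} is an identity of matrices of regular functions on $S$ in fixed bases, and \cref{lem:first-variation} computes the differential of that same matrix in the same bases.
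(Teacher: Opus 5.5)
Your proof is correct and follows the paper's: differentiate $D=\operatorname{cat}_{\ell',\ell}\circ\xi$ at $L$, obtain $\rank dD_L=\rank\mu_L=\ell+\ell'-1$ from \cref{lem:first-variation} and \cref{prop:petri}, and compare with $\dim W=\ell+\ell'-1$. Your inequality chain $\rank dD_L\le\rank d\xi_L\le\dim W$ even makes unnecessary the injectivity of $\operatorname{cat}_{\ell',\ell}$, which the paper invokes.
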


\begin{proof}
By \cref{lem:first-variation,prop:petri},
$
 \rank(dD_L)=\rank(\mu_L)=\ell+\ell'-1.
$
Differentiating \eqref{eq:D-factor-xi} gives
$dD_L=\operatorname{cat}_{\ell',\ell}\circ d\xi_L$.
Since $\operatorname{cat}_{\ell',\ell}$ is injective, $d\xi_L$ has the same
rank as $dD_L$, namely $\ell+\ell'-1$.  This is $\dim W$ by
\eqref{eq:W-dimension}, so $d\xi_L$ is surjective.
\end{proof}

\begin{proof}[Proof of \cref{thm:main}]
The inequality $\ell'\geq\ell$ is \eqref{eq:ellprime-ge-ell}, while
$b\geq0$ was proved in \cref{lem:fixed-divisor}.  Direct calculation gives
$
 \ell+\ell'-1+b
 =\ell+(g-d+\ell-1)-1+(d-2\ell+2)=g.
$
By \cref{prop:xi-submersion}, the differentials
$
 d\widetilde z_{0,L},\ldots,d\widetilde z_{\ell+\ell'-2,L}
$
are linearly independent in the cotangent space of the smooth variety
$\Pic^d(C)$ at $L$.  After shrinking the affine neighborhood $S$, choose
regular functions $\widetilde u_1,\ldots,\widetilde u_b$ vanishing at $L$
whose differentials complete them to a basis. Let $z_0,\ldots,z_{\ell+\ell'-2},u_1,\ldots,u_b$ denote the standard
coordinate functions on
$\Aff^{\ell+\ell'-1}\times\Aff^b=\Aff^g$.  Define
\[
 \Psi:S\longrightarrow\Aff^g,\qquad
 s\longmapsto
 \bigl(\widetilde z_0(s),\ldots,\widetilde z_{\ell+\ell'-2}(s),
 \widetilde u_1(s),\ldots,\widetilde u_b(s)\bigr).
\]
The morphism $\Psi$ has invertible differential at $L$ and is therefore
\'etale at $L$.
Thus
$
 \Psi^*z_r=\widetilde z_r
 $, $
 \Psi^*u_j=\widetilde u_j.
$
Consequently, after using $\Psi$ to identify the source with the model
\'etale-locally, \cref{prop:boundary-cat} becomes the  identity
$
 D=\Cat_{\ell',\ell}(z),
$
with no dependence on the $u$-variables.  For every
$0\leq r\leq\ell-1$, \cref{prop:standard-ideals} says that the standard
scheme $W^r_d(C)$ is cut out on $S$ by $I_{\ell-r}(D)$.  Hence the same
map $\Psi$ identifies all the embedded germs simultaneously with
$
 V\!\left(I_{\ell-r}(\Cat_{\ell',\ell}(z))\right)\times\Aff^b.
$
\end{proof}

\section{Global topology and cohomology}

In this section we prove Theorem \ref{thm:global-package}. We use the setup and the notation as in Theorem \ref{thm:global-package}. Set
$Z_{\lfloor m/2\rfloor+1}=\varnothing$ and
$U_s=Z_s\setminus Z_{s+1}$ for
$0\leq s\leq\lfloor m/2\rfloor$.  Nonemptiness of $X$ and Clifford's
theorem imply $m\geq0$.
At a point $M\in U_s$ one has $h^0(C,M)=r+s+1$.  Applying
\cref{thm:main} at $M$ shows that the essential factor of the germ of $X$ is
the affine cone
\[
 X_s^{(N_s)}=C\!\left(\Sigma_s^{(N_s)}\right),
 \qquad N_s=c+2s-1,
\]
where $\Sigma_s^{(N_s)}$ is the $s$-secant variety of the degree-$N_s$
rational normal curve in $\PP^{N_s}$; the remaining smooth factor is
$\Aff^{m-2s}$.  In particular, the local dimension is $m$.  Notice that
$c\geq1$, hence $N_s\geq2s$. The following is well-known.

\begin{lemma}\label{lem:cone-rhm}
Let $Y\subset\PP^N$ be a projective rational homology manifold of complex
dimension $e$.  Assume that
\[
 H^{2j}(Y,\QQ)=\QQ(-j)\quad(0\leq j\leq e),
 \qquad H^{2j+1}(Y,\QQ)=0,
\]
and that the hyperplane class satisfies hard Lefschetz.  Then the affine cone
$C(Y)$ is a rational homology manifold.
\end{lemma}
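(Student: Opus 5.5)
The plan is to compute the local cohomology of $C(Y)$ at every point and check that it is that of a smooth point of real dimension $2(e+1)$. Away from the vertex $0$, $C(Y)\setminus\{0\}$ is a $\CC^*$-bundle over $Y$, which is locally $Y\times\CC^*$. Since $Y$ is a rational homology manifold and $\CC^*$ is smooth, the product is again a rational homology manifold. So the only issue is the vertex.

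At the vertex, the link of $C(Y)$ is the circle bundle $K\to Y$ associated to $\OO_Y(-1)$. A deleted neighborhood of $0$ retracts onto $K$, and $C(Y)$ is contractible near $0$. By the long exact sequence of the pair, the local cohomology is
\[
H^k_{\{0\}}(C(Y),\QQ)\simeq\tilde H^{k-1}(K,\QQ).
\]
The rational homology manifold condition at $0$ is therefore equivalent to $K$ being a rational homology sphere of real dimension $2e+1$. In other words, $H^j(K,\QQ)$ should vanish for $0<j<2e+1$ and be $\QQ$ in degrees $0$ and $2e+1$. Orientability is automatic here because everything is complex-analytic.

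To compute $H^\bullet(K,\QQ)$, I would use the Gysin sequence of the oriented circle bundle $K\to Y$:
\[
\cdots\to H^{j-2}(Y)\xrightarrow{\ \cup h\ }H^{j}(Y)\to H^j(K)\to H^{j-1}(Y)\xrightarrow{\ \cup h\ }H^{j+1}(Y)\to\cdots
\]
Here $h$ is the Euler class, which up to sign is the hyperplane class. By hypothesis $H^{\mathrm{odd}}(Y)=0$ and $H^{2j}(Y)=\QQ$ for $0\leq j\leq e$. The key input is that $\cup h:H^{2j-2}(Y)\to H^{2j}(Y)$ is an isomorphism for $1\leq j\leq e$. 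I would derive this from hard Lefschetz: $h^{e}:H^0\to H^{2e}$ is an isomorphism, and it factors through each one-dimensional group $H^{2j}$, so each single step is nonzero and hence bijective.

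It follows that $H^j(K)$ is $\QQ$ for $j=0$. For $0<j<2e+1$ it vanishes: the cokernel of $\cup h$ is $0$ for $0<j\leq 2e$, and the kernel of $\cup h$ on $H^{j-1}$ is $0$ for $j-1<2e$. For $j=2e+1$ it is $\ker\bigl(\cup h:H^{2e}\to H^{2e+2}=0\bigr)=\QQ$. Thus $K$ is a rational homology $(2e+1)$-sphere, and $C(Y)$ is a rational homology manifold of complex dimension $e+1$.

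The main obstacle is the identification of the link with the circle bundle, together with the claim that a deleted neighborhood retracts onto it. This follows from the conical structure given by the $\CC^*$-action: use the $\RR_{>0}$-action together with a Euclidean norm. I would state this as standard. Everything else is bookkeeping with the Gysin sequence.
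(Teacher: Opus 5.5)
Your proof is correct and follows the paper's argument: you identify the link of the vertex with the unit-circle bundle of $\OO_Y(-1)$, use hard Lefschetz together with the one-dimensionality of the even groups to make $\cup h$ an isomorphism between consecutive nonzero groups, and read off from the Gysin sequence that the link is a rational homology $(2e+1)$-sphere. Beyond the paper's sketch, you make explicit the translation $H^k_{\{0\}}(C(Y),\QQ)\simeq\tilde H^{k-1}(K,\QQ)$ and the Gysin-sequence bookkeeping, which the paper leaves to the reader.
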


\begin{proof}
Away from its vertex the cone is locally a product of $Y$ with a smooth
one-dimensional factor.  The link of the vertex is the unit-circle bundle of
$\OO_Y(-1)$.  Since the even cohomology groups are one-dimensional, hard
Lefschetz implies that cup product with the hyperplane class is an isomorphism
between consecutive nonzero groups.  The Gysin sequence therefore shows that
the link has the rational cohomology of $S^{2e+1}$.
\end{proof}

\begin{proof}[Proof of \cref{thm:global-package}\textup{(i)}]
For $s=0$ the essential factor is a point.  For $s=1$ it is the cone over a
rational normal curve; its link is the lens space obtained as the unit-circle
bundle of $\OO_{\PP^1}(-N_1)$, hence is a rational homology sphere.

Let $s\geq2$.  The line bundle $\OO_{\PP^1}(N_s)$ is
$(2s-1)$-very ample and $\Sigma_s^{(N_s)}\neq\PP^{N_s}$.  By
\cite[Corollary~L]{CDOR}, $\Sigma_s^{(N_s)}$ is a rational homology
manifold.  Specializing \cite[Theorem~M]{CDOR} to $C=\PP^1$, so that
$H^1(C,\QQ)=0$, gives
\[
 H^{2j}(\Sigma_s^{(N_s)},\QQ)=\QQ(-j)
 \quad(0\leq j\leq2s-1),
 \qquad H^{2j+1}(\Sigma_s^{(N_s)},\QQ)=0.
\]
Intersection-cohomology hard Lefschetz, identified with ordinary hard
Lefschetz by rational smoothness, verifies the remaining hypothesis of
\cref{lem:cone-rhm}.  Thus the affine cone $X_s^{(N_s)}$ \textcolor{red}{is} a rational homology
manifold.  The assertion for $X$ follows from Theorem \ref{thm:main} and
stability under products with smooth factors.
Finally, the canonical morphism
$\QH_X[m]\to\IC_X^H$ is an isomorphism on the underlying rational perverse
sheaves.  Faithfulness of the realization functor for mixed Hodge modules gives
\eqref{eq:global-constant-IC}.
\end{proof}


\begin{proposition}\label{prop:global-AJ}
The map $a_{d,r}$ is a resolution.  Each $U_s$ is smooth, and for
$M\in U_s$,
\[
 a_{d,r}^{-1}(M)=|M\otimes A^{-r}|\simeq\PP^s.
\]
Moreover, $a_{d,r}^{-1}(U_s)\to U_s$ is a Zariski locally trivial
$\PP^s$-bundle and
\[
 \dim Z_s=m-2s,
 \qquad \operatorname{codim}_X Z_s=2s,
\]
so $a_{d,r}$ is semismall; in a Whitney stratification of
the map subordinate to the pieces $U_s$, the dense stratum in each $U_s$ is
relevant (see \cite[Definitions~2.1.1 and~3.2.1]{deCataldoMigliorini} for
definitions).
\end{proposition}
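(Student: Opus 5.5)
The plan is to use the hyperelliptic structure of line bundles from \cref{lem:fixed-divisor} to describe the fibers of $a_{d,r}$ explicitly. The dimension counts then follow from the local structure in \cref{thm:main}.

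\emph{Fibers.} Let $M\in Z_s$, so $h^0(M)\geq r+s+1$. By \cref{lem:fixed-divisor} applied to $M$, with $\ell_M=h^0(M)$, we have $M\simeq A^{\ell_M-1}\otimes\OO_C(B_M)$, where $B_M$ is effective and contains no complete fiber. The fiber $a_{d,r}^{-1}(M)$ is the set of effective $D$ of degree $m$ with $\OO_C(D)\simeq M\otimes A^{-r}$, that is, the linear system $|M\otimes A^{-r}|$. Since $M\otimes A^{-r}\simeq A^{\ell_M-1-r}(B_M)$ with $\ell_M-1-r\leq g$, the multiplication \eqref{eq:sections-A} by $\sigma_{B_M}$ gives
\[
h^0(M\otimes A^{-r})=\ell_M-r.
\]
This requires checking that $B_M$ remains the base divisor after twisting, which follows from the same splitting argument as in \cref{lem:central-splitting} applied to $\pi_*(M\otimes A^{-r})=\pi_*M\otimes\OO(-r)$. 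On $U_s$ we have $\ell_M=r+s+1$, so the fiber is $\PP^s$. In particular every point of $X$ lies in the image, so $a_{d,r}$ is surjective. Over $U_0$ the fiber is a single reduced point, because $h^0=1$. Since $C^{(m)}$ is smooth and projective, $a_{d,r}$ is proper and birational, hence a resolution. Here I use that $U_0$ is dense, which follows from the dimension count below.

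\emph{Smoothness of $U_s$ and dimensions.} \cref{thm:main} applied at $M\in U_s$, where $\ell=r+s+1$, identifies $Z_s=W^{r+s}_d$ locally with $V(I_1(\Cat))\times\Aff^b=\{0\}\times\Aff^{b}$. This is smooth of dimension $b=d-2\ell+2=m-2s$. Hence $U_s$ is smooth, $\dim Z_s=m-2s$ (since $U_s$ is dense in $Z_s$), and, using that $X$ has pure dimension $m$ by part~(i), $\operatorname{codim}_XZ_s=2s$. Semismallness then reduces to the inequality $2\dim a^{-1}(M)=2s\leq\operatorname{codim}Z_s=2s$. Equality holds, so each dense stratum of $U_s$ is relevant.

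\emph{Local triviality.} On $U_s$ the function $h^0$ is constant, so $p_*(\Lcal\otimes A^{-r})$ is locally free of rank $s+1$ by Grauert, and its formation commutes with base change. The preimage $a^{-1}(U_s)$ is then identified with the projective bundle $\PP(p_*(\Lcal\otimes A^{-r})|_{U_s})$, which is Zariski locally trivial. The main obstacle is to make this identification scheme-theoretic, i.e.\ to match it with the standard description of $C^{(m)}\to\Pic^m$ as the projectivization of a pushforward over the locus of constant $h^0$. I expect to cite \cite[Chapter IV]{ACGH} for this. Alternatively, one can observe that $U_s$ is reduced and smooth, so it suffices to check that $a^{-1}(U_s)$ is the reduced projective bundle. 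This holds because $C^{(m)}$ is smooth and the fibers are the linear spaces computed in the first step.
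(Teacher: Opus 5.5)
Your proposal is correct and follows essentially the same route as the paper: fibres via Lemma~\ref{lem:fixed-divisor} (your check through $\pi_*M\otimes\OO(-r)$ is a clean way to get $h^0(M\otimes A^{-r})=\ell_M-r$), smoothness of $U_s$ and $\dim Z_s=m-2s$ from the normal form of Theorem~\ref{thm:main} with $I_1(\Cat_{\ell',\ell})$, and the $\PP^s$-bundle structure by cohomology and base change. The only real difference is how you get the resolution: you use reduced one-point fibres over the dense smooth open $U_0$, whereas the paper uses nonemptiness of $U_0$ together with normality of $X$ from \cite[Theorem~7.20]{BD}, and that normality also supplies the global reducedness of $X$ that your use of ``resolution'' tacitly assumes.
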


\begin{proof} 
For $M\in U_s$, Lemma \ref{lem:fixed-divisor} gives
$M\simeq A^{r+s}\otimes\OO_C(B)$, where $B$ is effective of degree
$m-2s$ and contains no complete hyperelliptic fiber.  Hence
$h^0(M\otimes A^{-r})=s+1$, which proves the fiber formula and
surjectivity.  Constancy of $h^0$ and $h^1$ on $U_s$, followed by cohomology
and base change, identifies the family of complete linear systems with the
projectivization of a vector bundle.

The stratum $U_0$ is nonempty, so the map is birational.  Its source is smooth,
and its target is normal \cite[Theorem~7.20]{BD}; hence it is a resolution.  At a
point of $U_s$, the simultaneous local normal form for $Z_s=W_d^{r+s}(C)$ is
obtained by setting every essential catalecticant coordinate equal to zero.
Thus $U_s$ is smooth.  Applying the same construction to $Z_s$ gives
$\dim Z_s=m-2s$.  For $M\in U_s$ one therefore has
$
 2\dim a_{d,r}^{-1}(M)=2s=\operatorname{codim}_X U_s,
$
so equality holds on $U_s$ and hence on its dense stratum in
any such refinement.
\end{proof}

\begin{proof}[Proof of \cref{thm:global-package}\textup{(ii)}]
Choose a Whitney stratification of the proper algebraic map $a_{d,r}$
subordinate to the finite partition $X=\bigsqcup_s U_s$.  Such a refinement
exists, and \cref{prop:global-AJ} shows that the map is topologically locally
trivial over each of its strata.  For each $s$, the refinement has a unique
dense stratum $S_s\subseteq U_s$; its closure is $Z_s$ because $Z_s$ is
irreducible (it is the image of $C^{(m-2s)}$).  Along $S_s$ the fiber has
dimension $s$ and $\operatorname{codim}_X S_s=2s$, so $S_s$ is relevant.
Every other stratum contained in $U_s$ has strictly larger codimension while
the fiber dimension is still $s$, and is therefore not relevant.  Hence the
decomposition theorem
\cite[Corollaire~3]{SaitoHM},
\cite[Theorem~3.4.1]{deCataldoMigliorini} has exactly the supports $Z_s$.

Over $S_s$, the corresponding local system is the top cohomology of the
irreducible fiber, $H^{2s}(\PP^s,\QQ)=\QQ(-s)$.  It is constant because
$a_{d,r}^{-1}(U_s)\to U_s$ is the projectivization of a vector bundle and its
relative hyperplane class gives a global generator.
Thus
\[
 Ra_{d,r*}\QH_{C^{(m)}}[m]
 \simeq
 \bigoplus_s \IC^H_{Z_s}(-s).
\]
Part~\textup{(i)} identifies
$\IC^H_{Z_s}$ with $\QH_{Z_s}[m-2s]$, proving
\eqref{eq:global-MHM-decomposition}.  Hypercohomology gives
\eqref{eq:global-cohom-decomposition}.  If $m<2$, there
are no summands with $s\geq1$, so \eqref{eq:global-recursion} has the
stated boundary interpretation.  If $m\geq2$, reindexing the summands
with $s\geq1$ as the decomposition associated with
$C^{(m-2)}\to W_d^{r+1}(C)$ gives \eqref{eq:global-recursion}.
\end{proof}


\begin{proof}[Proof of \cref{thm:global-package}\textup{(iii)}]
Let $\mathcal H_Y(u,v)=\sum h^{p,q}(Y)u^pv^q$ for a projective variety with
pure cohomology.  The Hodge refinement of Macdonald's symmetric-product
formula \cite{Burillo} is
\[
 \sum_{n\geq0}\mathcal H_{C^{(n)}}(u,v)t^n
 =\frac{(1+ut)^g(1+vt)^g}{(1-t)(1-uvt)}.
\]
Using \eqref{eq:global-recursion} one easily obtains \eqref{eq:global-hodge} and  \eqref{eq:global-betti}.
Purity follows from
part~\textup{(i)}, and the purity-implies-formality theorem
\cite[Theorem~8.1]{CiriciHorel} proves the last assertion.
\end{proof}

\begin{proof}[Proof of \cref{thm:global-package}\textup{(iv)}]
Write $J_d=\Pic^d(C)$ and let
$
i:X\hookrightarrow J_d
$
be the inclusion.  Let
\[
u_m:C^{(m)}\longrightarrow\Pic^m(C),
\qquad
D\longmapsto\OO_C(D),
\]
be the Abel--Jacobi morphism.  It is shown in
\cite[\S4.2]{BresslerBrylinski} that
\[
u_m^*:H^k(\Pic^m(C),\QQ)
\longrightarrow H^k(C^{(m)},\QQ)
\]
is injective for every $k\leq m$.

Let
$
 \tau:J_d\to\Pic^m(C)$ be the isomorphism given by $M\longmapsto M\otimes A^{-r}$.
Then
$
 u_m=\tau\circ i\circ a_{d,r}.
$
Consequently,
$u_m^*=a_{d,r}^*\circ i^*\circ\tau^*$.  Since $u_m^*$ is injective and
$\tau^*$ is an isomorphism, $i^*$ is injective in every degree $k\leq m$.
By \eqref{eq:global-betti},
\[
 \dim H^k(X,\QQ)=\binom{2g}{k}
 =\dim H^k(J_d,\QQ)\qquad(k\leq m),
\]
so the restriction $$i^*: H^k(J_d,\QQ)\to H^k(X,\QQ)$$ is in fact an isomorphism in this range.

Poincar\'e's formula
\cite[Chapter~I, \S5]{ACGH} says
$
 (u_m)_*(1)=\theta_m^c/c!
 \in H^{2c}(\Pic^m(C),\QQ),
$
where $\theta_m$ is the principal-polarization class on $\Pic^m(C)$; the
left-hand side is the cohomology class of the image $W_m^0(C)$.  We have
$
 \tau(X)=W_m^0(C)
$
and $\tau$ transports the fundamental cycle of $X$ to the fundamental cycle
of $W_m^0(C)$.  Tensoring by the fixed line bundle $A^{-r}$ is a translation
between Picard components and therefore preserves the translation-invariant
principal-polarization class.  Thus $\tau^*\theta_m=\theta$, and Poincar\'e's
formula gives
\[
 [X]=\frac{\theta^c}{c!}\in H^{2c}(J_d,\QQ).
\]

By part~\textup{(i)}, the projective variety $X$ is a rational homology
manifold of complex dimension $m$.  It therefore has a fundamental class and
perfect rational Poincar\'e pairings.  If
$i_*^{\mathrm{BM}}$ denotes proper pushforward in Borel--Moore homology, the
cohomological Gysin map is defined by
\[
 i_*=\operatorname{PD}_{J_d}^{-1}\circ i_*^{\mathrm{BM}}
      \circ\operatorname{PD}_X:
 H^k(X,\QQ)\longrightarrow H^{k+2c}(J_d,\QQ)(c).
\]
It is a morphism of rational Hodge structures.  Moreover, $i_*$ is adjoint
to restriction, that is, for
$x\in H^k(X,\QQ)$ and $\beta\in H^{2m-k}(J_d,\QQ)$,
$
 \int_{J_d}i_*x\smile\beta
 =\int_X x\smile i^*\beta.
$
This  follows directly from the above definition and the
functoriality of the cap product.  We will use it below to identify $i_*$ with the dual of a complementary-degree restriction map.
We also have
\[
 i_*i^*(\alpha)=i_*(1\smile i^*\alpha)=i_*(1)\smile\alpha=
  [X]\smile\alpha
 =\frac1{c!}\theta^c\smile\alpha.
\]

We now determine the restriction $i^*$ in the remaining degrees.  Suppose first that
$m\leq k\leq2m$.  Under the perfect Poincar\'e pairings, the adjointness
identity identifies
\[
 i_*:H^k(X,\QQ)\longrightarrow H^{k+2c}(J_d,\QQ)(c)
\]
with the dual of
\[
 i^*:H^{2m-k}(J_d,\QQ)\longrightarrow H^{2m-k}(X,\QQ).
\]
Since $2m-k\leq m$, the latter map is an isomorphism by the low-degree
argument, and hence so is $i_*$.  For $k>2m$, both the source and the target of
$i_*$ vanish.  Thus $i_*$ is an isomorphism for every $k\geq m$.

Let $L=\theta\smile-$ be the Lefschetz operator on $J_d$.  Hard Lefschetz
implies that
\[
 L^c:H^k(J_d,\QQ)\longrightarrow H^{k+2c}(J_d,\QQ)
\]
is surjective for $k\geq m=g-c$. 
Let $y\in H^k(X,\QQ)$ with $k\geq m$.  Since $i_*$ is an isomorphism and
$L^c$ is surjective, choose $\alpha\in H^k(J_d,\QQ)$ such that
\[
 \frac1{c!}\theta^c\smile\alpha=i_*y.
\]
The formula for $i_*i^*$ gives $i_*i^*\alpha=i_*y$, and the injectivity of
$i_*$ yields $i^*\alpha=y$.  Thus the restriction $i^*$ is surjective in every degree
$k\geq m$.  The same argument shows, for such $k$, that
\[
 \ker i^*
 =\ker\!\left(
  \theta^c\smile-:H^k(J_d,\QQ)\longrightarrow
  H^{k+2c}(J_d,\QQ)
 \right):
\]
one inclusion follows by applying $i_*i^*$, and the reverse inclusion follows
from the injectivity of $i_*$.  In degree $m$, the map
$L^c:H^m(J_d,\QQ)\to H^{m+2c}(J_d,\QQ)$ is precisely the hard-Lefschetz
isomorphism, so this kernel is zero.

The  homomorphism $i^*$ is therefore surjective in all degrees, has no
kernel through degree $m$, and has in degree $k\geq m+1$ exactly the kernel
appearing in the definition of $\mathcal I_{m,c}$.  These kernels form a
homogeneous ideal because
$
 \theta^c\smile(\alpha\smile\beta)
 =(\theta^c\smile\alpha)\smile\beta.
$
This proves \eqref{eq:global-ring}.  Finally, the low-degree restriction
isomorphisms give the first line of \eqref{eq:global-Hodge-structures}, while
the high-degree Gysin isomorphisms give the second.  At degree $m$, their
composition is $(1/c!)\theta^c\smile-$ by the projection formula, which
explains the compatibility asserted in the theorem.
\end{proof}

\end{document}